\documentclass[11pt]{amsart}

\usepackage[english]{babel}
\usepackage[utf8]{inputenc}
\usepackage{amsmath, amssymb, amsthm}
\usepackage{graphicx}
\usepackage{hyperref}
\usepackage{caption}
\usepackage{array}
\usepackage{stmaryrd}
\usepackage{longtable}
\usepackage{booktabs}
\usepackage{makecell}

\hypersetup{
	colorlinks=true,
	urlcolor=blue,      
	citecolor=red,      
	linkcolor=red       
}

\theoremstyle{definition}

\newtheorem{teo}{Theorem}

\newtheorem{ex}{Example}
\newtheorem{defi}{Definition}

\newtheorem{prop}{Proposition}

\newtheorem{lema}{Lemma}

\usepackage{tikz}
\usepackage{xcolor}

\definecolor{bloco1}{RGB}{230,240,255}
\definecolor{bloco2}{RGB}{255,240,230}
\definecolor{pista}{RGB}{10,10,120}
\definecolor{simetria}{RGB}{200,0,0}

\newcommand{\sudoku}[1]{
	\begin{tikzpicture}[scale=0.5]
		\draw[step=1,thin] (0,0) grid (9,9);
		\draw[step=3,very thick] (0,0) grid (9,9);
		
		\foreach \x [count=\i] in {#1} {
			\pgfmathtruncatemacro{\col}{mod(\i-1,9)}
			\pgfmathtruncatemacro{\row}{8-floor((\i-1)/9)}
			
			\def\zero{0}
			\ifx\x\zero
			\else
			\node at (\col+0.5,\row+0.5) {\x};
			\fi
		}
	\end{tikzpicture}
}

\newcommand{\blocossudoku}{
	\begin{tikzpicture}[scale=0.5]
		
		\draw[step=1,thin,gray!30] (0,0) grid (9,9);
		
		\draw[step=3,very thick] (0,0) grid (9,9);
		
		\node[scale=2] at (1.5,7.5) {$B_1$};
		\node[scale=2] at (4.5,7.5) {$B_2$};
		\node[scale=2] at (7.5,7.5) {$B_3$};
		
		\node[scale=2] at (1.5,4.5) {$B_4$};
		\node[scale=2] at (4.5,4.5) {$B_5$};
		\node[scale=2] at (7.5,4.5) {$B_6$};
		
		\node[scale=2] at (1.5,1.5) {$B_7$};
		\node[scale=2] at (4.5,1.5) {$B_8$};
		\node[scale=2] at (7.5,1.5) {$B_9$};
		
	\end{tikzpicture}
}

\newcommand{\sudokubanda}[1]{
	\begin{tikzpicture}[scale=0.5]
		\draw[step=1,thin] (0,0) grid (9,3);
		\draw[very thick] (0,0) rectangle (9,3);
		\foreach \x in {3,6} {
			\draw[very thick] (\x,0) -- (\x,3);
		}
		
		\foreach \x [count=\i] in {#1} {
			\pgfmathtruncatemacro{\col}{mod(\i-1,9)}
			\pgfmathtruncatemacro{\row}{2-floor((\i-1)/9)}
			
			\def\zero{0}
			\ifx\x\zero
			\else
			\node at (\col+0.5,\row+0.5) {\x};
			\fi
		}
	\end{tikzpicture}
}

\newcommand{\sudokubloco}{
\begin{tikzpicture}[scale=0.5]
	
	\draw[step=1,thin] (0,0) grid (3,3);
	\draw[very thick] (0,0) rectangle (3,3);
	
	\foreach \x [count=\i] in {1,2,3,4,5,6,7,8,9}{
		\pgfmathtruncatemacro{\col}{mod(\i-1,3)}
		\pgfmathtruncatemacro{\row}{2-floor((\i-1)/3)}
		\node at (\col+0.5,\row+0.5) {\x};
	}
	
\end{tikzpicture}
}

\title{Counting, Symmetries and Equivalence Classes of Sudoku Grids}
\author{Fernanda Pereira}
\thanks{Instituto Tecnológico de Aeronáutica (ITA), São José dos Campos-SP, Brazil. E-mail: \texttt{fpereira@ita.br}}
\date{}

\begin{document}

\maketitle

\begin{abstract}
	Sudoku is a widely popular puzzle whose complete grids have been enumerated computationally: there are approximately $6.67 \times 10^{21}$ of them and, up to symmetry and relabeling of digits, $5,472,730,538$ essentially different ones. The classical enumeration reduces the count to $44$ equivalence classes of the first band through a chain of ad hoc reductions, leaving the number $44$ without any apparent structural explanation. We present an alternative derivation of these $44$ classes, in which they arise as isomorphism classes of unordered triples (multisets) of column partitions under relabeling, a single invariant that replaces the original chain of reductions. This invariant makes it possible to apply Burnside's Lemma by hand: we recover $44$ through a closed derivation requiring no computational enumeration.
\end{abstract}


\section{Introduction}

Classic Sudoku is a puzzle defined on a square grid made up of 9 rows and 9 columns, totaling 81 cells, which are also organized into 9 blocks of dimension $3 \times 3$. The goal is to fill all the cells with digits from 1 to 9, so that each number occurs exactly once in each row, in each column, and in each $3 \times 3$ block. Each puzzle presents some cells that are already filled in, called clues, from which the rest of the grid must be completed while respecting the imposed constraints. When the initial configuration admits a unique solution, the Sudoku is said to be well-posed or valid.

Sudoku descends from the Latin squares studied by Euler, and reached its current form only in 1979, through Howard Garns's Number Place, later popularized in Japan by the publisher Nikoli under the name Sudoku and adopted worldwide by newspapers and puzzle books \cite{Delahaye2006, Hayes2006}.

Beyond its popularity as a puzzle, Sudoku is a rich combinatorial object and the subject of a substantial body of mathematical work, touching combinatorics, group theory, graph theory, and computational complexity: its underlying logic has been studied in depth \cite{Berthier2007, Rosenhouse2011}, it has been modeled as a graph-coloring problem, and the minimum number of clues admitting a unique solution has been settled computationally.

The enumeration of complete Sudoku grids has historically proceeded through an ad hoc chain of reductions that arrives at the number $44$ (the count of essentially different first bands) as an accumulation of successive tricks, leaving the impression that $44$ has no structural meaning of its own. The count of essentially different grids as a whole, in turn, has so far relied entirely on a computational application of Burnside's Lemma.

This paper addresses both counting questions. Section~\ref{sec:contagem} presents the classical enumeration of complete grids by Felgenhauer and Jarvis \cite{FelgenhauerJarvis2006}. Section~\ref{sec:derivacao} proposes an alternative, invariant-based derivation of their $44$ equivalence classes, conceptually cleaner. Section~\ref{sec:burnside} shows that this invariant allows Burnside's Lemma to be applied by hand, yielding the $44$ classes (and the intermediate stages of the reduction) through a closed derivation, alongside a presentation of Russell and Jarvis's computational count of essentially different grids \cite{Russel2006}.

Throughout the paper, we denote by $S_n$ the symmetric group of degree $n$, that is, the group of all permutations of a set with $n$ elements, and by $|X|$ the cardinality of the set $X$.


\section{Number of complete Sudoku grids}\label{sec:contagem}

Counting the number of complete Sudoku grids is not simple: the row, column, and block constraints reduce drastically the naive upper bound of $(9!)^9$ fillings.

\subsection{Counting Sudoku grids}

The total number of complete Sudoku grids was determined in \cite{FelgenhauerJarvis2006}:
\[
6\,670\,903\,752\,021\,072\,936\,960
\]
grids. The $3\times 3$ blocks are named as below; $[B_1,B_2,B_3]$, $[B_4,B_5,B_6]$, $[B_7,B_8,B_9]$ are called \emph{bands}, and $[B_1,B_4,B_7]$, $[B_2,B_5,B_8]$, $[B_3,B_6,B_9]$ are called \emph{stacks}.

\begin{center}
	\blocossudoku
	\captionof{figure}{The nine blocks of a Sudoku grid}
\end{center}

Felgenhauer and Jarvis's method groups configurations of the first band that admit the same number of completions into classes, successively reduces the catalog of configurations via symmetries, and computes the completion count of one representative per class by backtracking. Below we present some ideas and the chain of reductions used by Felgenhauer and Jarvis.

\subsubsection{Reduction by relabeling and counting in band 1}

Fixing $B_1$ in the standard form,

\begin{center}
	\sudokubloco
	\captionof{figure}{Block in standard form}
\end{center}

\noindent the raw number of configurations of $B_2$ and $B_3$ is $\mathbf{2,612,736}$. The goal is to reduce this catalog by identifying configurations with the same number of completions, which therefore only need to be tested once.

\subsubsection{Reduction 2: lexicographic ($2,612,736 \to 36,288$, factor $72$)}

Permuting the columns within a stack, or swapping whole stacks, preserves the number of completions of a Sudoku grid: applying the same permutation to rows $4$--$9$ gives a bijection between completions of the original and the transformed configuration.

The lexicographic reduction consists of:
\begin{enumerate}
	\item permuting the columns of $B_2$ among themselves, and the columns of $B_3$ among themselves, so that the entries of the first row of each block are in increasing order;
	\item swapping the positions of blocks $B_2$ and $B_3$, if necessary, so that the upper-left entry of $B_2$ is smaller than the upper-left entry of $B_3$.
\end{enumerate}

For each of the blocks $B_2$ and $B_3$, there are $3! = 6$ ways to permute its columns, totaling $6^2 = 36$ combinations for the pair; adding the swap of $B_2$ and $B_3$, this doubles to
\[
6^2 \times 2 = 72.
\]
Dividing $2,612,736$ by $72$, the lexicographic reduction reduces the catalog to $\mathbf{36,288}$ representative configurations.

\subsubsection{Reduction 3: permutations involving $B_1$ ($36,288 \to 2,051$)}\label{subs:redB1}

Besides permuting columns within $B_2$ and $B_3$, we can permute columns within $B_1$ or permute the three blocks among themselves. Each of these moves $B_1$ out of the standard form, which is restored by relabeling the digits, yielding a new but equivalent configuration of $B_2, B_3$.

Applying all $6^4 = 1,296$ such operations to each of the $36,288$ catalog configurations, the naive expectation (since $1,296/72 = 18$) would be $36,288/18 = 2,016$ classes. Felgenhauer and Jarvis report instead $\mathbf{2,051}$ classes: unlike the $72$-element group of Reduction 2, which acts freely on the configurations (since $B_1$ is left untouched), the $1,296$-element group moves $B_1$, and the relabeling needed to restore it can, for configurations with some symmetry of their own, coincide across different operations, producing orbits smaller than the generic $18$ and hence more classes than expected. This is the same phenomenon we will reencounter later via Burnside's Lemma: configurations with their own symmetry have smaller orbits, so it is necessary to record, case by case, how many of the $36,288$ configurations correspond to each of the $2,051$ final classes.

\subsubsection{Reduction 4: permutation of the rows ($2,051 \to 416$)}

Applying the $6$ permutations of the rows of the band (plus the relabeling necessary to restore $B_1$ to the standard form), the catalog reduces from $2,051$ to $\mathbf{416}$ classes. The table below compares, at each level, the naive expectation (dividing $36,288$ by the naive index) with the actual number of classes:

\begin{center}
	\renewcommand{\arraystretch}{1.35}
	\small
	\begin{tabular}{|p{4.4cm}|c|c|c|c|}
		\hline
		\textbf{Symmetries} & \textbf{Order} & \textbf{Index} &
		\textbf{Naive} & \textbf{Actual} \\
		\hline
		Lexicographic reduction & $72$ & $1$ & $36,288$ & $36,288$ \\
		$+$ permutations with $B_1$ & $1,296$ & $18$ & $2,016$ & $\mathbf{2,051}$ \\
		$+$ rows of the band & $7,776$ & $108$ & $336$ & $\mathbf{416}$ \\
		\hline
	\end{tabular}
\end{center}

\noindent Only in the first row do naive and actual coincide; in the others, symmetric configurations make the actual value larger.

\subsubsection{Reduction 5: sub-rectangles and final count ($416 \to 71 \to 44$)}

Within band $1$, if two rows and two columns (possibly in different blocks) show the same pair of values $\{a,b\}$ in opposite orders,
\[
\begin{bmatrix} a & \cdots & b \\ b & \cdots & a \end{bmatrix},
\]
swapping $a \leftrightarrow b$ only in those two columns preserves the number of completions: since $a$ and $b$ already occupy those columns within band $1$, they cannot reappear there in rows $4$--$9$. This is neither a permutation of whole rows or columns nor a relabeling of digits, and it generalizes to $k\times 2$ and $2\times k$ sub-rectangles. In fact, the number of completions of a band depends only on the sets of $3$ numbers in each column, regardless of their order within the column.

Felgenhauer and Jarvis report that, using $2\times 2$ sub-rectangles alone, the list of $416$ configurations was reduced to $\mathbf{174}$; adding also $2\times 3$, $3\times 2$, and $4\times 2$ sub-rectangles, the list fell to only $\mathbf{71}$.

\subsubsection{The 44 classes}

A computer program completed each of the $71$ representatives by \emph{backtracking}, finding only $\mathbf{44}$ distinct values for the number of completions, a strong indication (later confirmed by a verification program written by Ed Russell\footnote{Page of the Sudoku grid enumeration project, maintained by Frazer Jarvis: \url{http://www.afjarvis.org.uk/sudoku/bertram.html}, which hosts Ed Russell's verification program (accessed on Jul. 21, 2026).}) that the $71$ classes collapse into $44$.

The $44$ classes are described in Table~\ref{tab:44classes}, taken from \cite{siteJarvis}. Block $B_1$ is in the standard form, and each row lists, in increasing order, the sets of numbers in columns $4$ through $9$ (blocks $B_2$ and $B_3$); $m_C$ is the number of catalog configurations equivalent to that class, and $n_C$ is the number of completions of a representative of the class. The total number of grids is
\[
N_0 \;=\; 1,881,169,920 \times \sum_{C=1}^{44} m_C\, n_C \;=\;
6,670,903,752,021,072,936,960,
\]
where $9! \times 72^2 = 1,881,169,920$: the $9!$ comes from relabeling $B_1$, and the two factors of $72$ compensate for two lexicographic reductions already built into the table (that of $B_2, B_3$, and the analogous one for $B_4, B_7$) which therefore do not appear in it explicitly.

\begin{longtable}{|
		>{\centering\arraybackslash}m{0.5cm}|
		>{\centering\arraybackslash}m{1cm}|
		>{\centering\arraybackslash}m{1cm}|
		>{\centering\arraybackslash}m{1cm}|
		>{\centering\arraybackslash}m{1cm}|
		>{\centering\arraybackslash}m{1cm}|
		>{\centering\arraybackslash}m{1cm}|
		>{\centering\arraybackslash}m{1cm}|
		>{\centering\arraybackslash}m{1.7cm}|}
	\caption{The 44 classes}
	\label{tab:44classes}\\
	\hline
	\textbf{C} &
	\textbf{C4} &
	\textbf{C5} &
	\textbf{C6} &
	\textbf{C7} &
	\textbf{C8} &
	\textbf{C9} &
	\textbf{$\boldsymbol{m_C}$} &
	\textbf{$\boldsymbol{n_C}$} \\
	\hline

	\endfirsthead

	\hline
	\textbf{C} &
	\textbf{C4} &
\textbf{C5} &
\textbf{C6} &
\textbf{C7} &
\textbf{C8} &
\textbf{C9} &
	\textbf{$\boldsymbol{m_C}$} &
	\textbf{$\boldsymbol{n_C}$} \\
	\hline
	
	\endhead
	
	\hline
	\endfoot
	
	\hline
\endlastfoot

1  & 1,2,4 & 3,5,7 & 6,8,9 & 1,2,5 & 3,6,7 & 4,8,9 & 2484 & 97961464 \\
2  & 1,2,4 & 3,5,7 & 6,8,9 & 1,2,5 & 3,6,8 & 4,7,9 & 2592 & 97539392 \\
3  & 1,2,4 & 3,5,7 & 6,8,9 & 1,2,5 & 3,6,9 & 4,7,8 & 1296 & 98369440 \\
4  & 1,2,4 & 3,5,7 & 6,8,9 & 1,2,5 & 3,7,8 & 4,6,9 & 1512 & 97910032 \\
5  & 1,2,4 & 3,5,7 & 6,8,9 & 1,2,6 & 3,4,8 & 5,7,9 & 2808 & 96482296 \\
6  & 1,2,4 & 3,5,7 & 6,8,9 & 1,2,6 & 3,4,9 & 5,7,8 & 684 & 97549160 \\
7  & 1,2,4 & 3,5,7 & 6,8,9 & 1,2,6 & 3,5,7 & 4,8,9 & 1512 & 97287008 \\
8  & 1,2,4 & 3,5,7 & 6,8,9 & 1,2,6 & 3,5,8 & 4,7,9 & 1944 & 97416016 \\
9  & 1,2,4 & 3,5,7 & 6,8,9 & 1,2,6 & 3,5,9 & 4,7,8 & 2052 & 97477096 \\
10 & 1,2,4 & 3,5,7 & 6,8,9 & 1,2,7 & 3,4,8 & 5,6,9 & 288 & 96807424 \\
11 & 1,2,4 & 3,5,7 & 6,8,9 & 1,2,7 & 3,5,8 & 4,6,9 & 864 & 98119872 \\
12 & 1,2,4 & 3,5,7 & 6,8,9 & 1,2,8 & 3,4,7 & 5,6,9 & 1188 & 98371664 \\
13 & 1,2,4 & 3,5,7 & 6,8,9 & 1,2,8 & 3,5,7 & 4,6,9 & 648 & 98128064 \\
14 & 1,2,4 & 3,5,7 & 6,8,9 & 1,2,8 & 3,6,9 & 4,5,7 & 2592 & 98733568 \\
15 & 1,2,4 & 3,5,7 & 6,8,9 & 1,3,5 & 2,6,9 & 4,7,8 & 648 & 97455648 \\
16 & 1,2,4 & 3,5,7 & 6,8,9 & 1,3,5 & 2,7,8 & 4,6,9 & 360 & 97372400 \\
17 & 1,2,4 & 3,5,7 & 6,8,9 & 1,3,6 & 2,5,9 & 4,7,8 & 3240 & 97116296 \\
18 & 1,2,4 & 3,5,7 & 6,8,9 & 1,3,8 & 2,6,7 & 4,5,9 & 540 & 95596592 \\
19 & 1,2,4 & 3,5,7 & 6,8,9 & 1,3,8 & 2,6,9 & 4,5,7 & 756 & 97346960 \\
20 & 1,2,4 & 3,5,7 & 6,8,9 & 1,4,5 & 2,6,9 & 3,7,8 & 324 & 97714592 \\
21 & 1,2,4 & 3,5,7 & 6,8,9 & 1,4,5 & 2,7,8 & 3,6,9 & 432 & 97992064 \\
22 & 1,2,4 & 3,5,7 & 6,8,9 & 1,4,6 & 2,3,9 & 5,7,8 & 756 & 98153104 \\
23 & 1,2,4 & 3,5,7 & 6,8,9 & 1,4,7 & 2,6,9 & 3,5,8 & 864 & 98733184 \\
24 & 1,2,4 & 3,5,7 & 6,8,9 & 1,4,8 & 2,6,9 & 3,5,7 & 108 & 98048704 \\
25 & 1,2,4 & 3,5,7 & 6,8,9 & 1,5,6 & 2,3,9 & 4,7,8 & 756 & 96702240 \\
26 & 1,2,4 & 3,5,8 & 6,7,9 & 1,2,5 & 3,6,8 & 4,7,9 & 516 & 98950072 \\
27 & 1,2,4 & 3,5,8 & 6,7,9 & 1,2,6 & 3,4,8 & 5,7,9 & 576 & 97685328 \\
28 & 1,2,4 & 3,5,8 & 6,7,9 & 1,2,7 & 3,5,8 & 4,6,9 & 432 & 98784768 \\
29 & 1,2,4 & 3,5,8 & 6,7,9 & 1,3,7 & 2,6,9 & 4,5,8 & 324 & 98493856 \\
30 & 1,2,4 & 3,5,8 & 6,7,9 & 1,4,7 & 2,5,8 & 3,6,9 & 72 & 100231616 \\
31\footnote{During the computational validation described in Section~\ref{sec:derivacao}, we found a typographical error in class $31$ of this table: the third set of columns $7$--$9$ appears printed as $\{3,7,8\}$, which is not a valid partition (it repeats the $7$ and omits the $5$); the correct set is $\{3,5,8\}$, as presented here. The error is already present in the primary source (Ed Russell's table published on Jarvis's page) and is reproduced verbatim in \cite{Shi}, which adds a second error in the same row: the number of completions printed as $995,251,846$, when the correct value is $99,525,184$.\label{nota:errata}} & 1,2,4 & 3,5,8 & 6,7,9 & 1,4,7 & 2,6,9 & 3,5,8 & 216 & 99525184 \\
32 & 1,2,4 & 3,5,8 & 6,7,9 & 1,5,6 & 2,3,7 & 4,8,9 & 252 & 96100688 \\
33 & 1,2,4 & 3,5,9 & 6,7,8 & 1,2,7 & 3,5,6 & 4,8,9 & 288 & 96631520 \\
34 & 1,2,4 & 3,5,9 & 6,7,8 & 1,2,7 & 3,5,9 & 4,6,8 & 864 & 97756224 \\
35 & 1,2,4 & 3,5,9 & 6,7,8 & 1,4,7 & 2,5,8 & 3,6,9 & 216 & 99083712 \\
36 & 1,2,4 & 3,5,9 & 6,7,8 & 1,4,7 & 2,6,8 & 3,5,9 & 432 & 98875264 \\
37 & 1,2,4 & 3,6,9 & 5,7,8 & 1,2,5 & 3,6,9 & 4,7,8 & 216 & 102047904 \\
38 & 1,2,4 & 3,6,9 & 5,7,8 & 1,2,7 & 3,6,9 & 4,5,8 & 144 & 101131392 \\
39 & 1,2,4 & 3,6,9 & 5,7,8 & 1,3,5 & 2,6,7 & 4,8,9 & 324 & 96380896 \\
40 & 1,2,4 & 3,6,9 & 5,7,8 & 1,4,7 & 2,5,8 & 3,6,9 & 108 & 102543168 \\
41 & 1,2,4 & 3,7,9 & 5,6,8 & 1,4,6 & 2,3,9 & 5,7,8 & 12 & 99258880 \\
42 & 1,2,6 & 3,4,8 & 5,7,9 & 1,3,5 & 2,4,9 & 6,7,8 & 20 & 94888576 \\
43 & 1,2,6 & 3,7,8 & 4,5,9 & 1,4,7 & 2,5,8 & 3,6,9 & 24 & 97282720 \\
44 & 1,4,7 & 2,5,8 & 3,6,9 & 1,4,7 & 2,5,8 & 3,6,9 & 4 & 108374976 \\
	\hline
\end{longtable}

\section{An alternative derivation of the 44 classes} \label{sec:derivacao} 

The chain of reductions from the previous section ($2,612,736 \to 36,288 \to 2,051 \to 416 \to 71 \to 44$)
arrives at the number $44$ through an accumulation of successive tricks, and the reader may be
left with the impression that $44$ is an accidental number, with no structural
meaning. 

The question of obtaining the $44$ classes directly, without going through the chain of reductions, was raised by Shi, Zhang, and Aslaksen in \cite{Shi}, who generate class representatives by a case-by-case analysis, rejecting those equivalent to previous classes, but offer no proof of completeness beyond a remark that ``similar operations'' would settle the remaining cases. Their listing coincides row by row with Table~\ref{tab:44classes} from \cite{siteJarvis}, including the erroneous entry for class $31$ (see Note~\ref{nota:errata}), and its stated ordering convention is inconsistent with the lexicographic reduction: for instance, the lexicographically reduced realization of class $6$ forces a swap of columns $8$ and $9$, causing class $6$ to precede class $5$, contrary to their listed order. The ``lexicographically reduced form of each class'', as stated, is thus not well defined.

In this section we present an alternative derivation, conceptually cleaner, in which the $44$ classes emerge as isomorphism classes of a simple combinatorial object.

\subsection*{The invariant: column partitions}

By the rules of Sudoku, the three columns of each block partition $\{1,\dots,9\}$ into three sets of
three elements. A band therefore defines a triple of partitions $(P_1, P_2, P_3)$, one per block.

\begin{ex}\label{ex:G0}
Consider the band $G_0$:
\begin{center}
\sudokubanda{
	1 , 2 , 3 , 4 , 5 , 8 , 6 , 7 , 9,
	4 , 5 , 6 , 1 , 7 , 9 , 2 , 3 , 8 ,
	7 , 8 , 9 , 2 , 3 , 6 , 1 , 4 , 5
}
\end{center}
Its column partitions are
\begin{align*}
	P_1 &= \big\{\{1,4,7\},\ \{2,5,8\},\ \{3,6,9\}\big\} =: \mathcal{C},\\
	P_2 &= \big\{\{1,2,4\},\ \{3,5,7\},\ \{6,8,9\}\big\},\\
	P_3 &= \big\{\{1,2,6\},\ \{3,4,7\},\ \{5,8,9\}\big\},
\end{align*}
where $\mathcal{C}$ denotes the \emph{standard partition}: the column partition of any block in the standard form.
\end{ex}

Note that the triple of partitions discards all information about the rows: it does not record in which of the three rows each digit is, only which digits share the same column. As mentioned in the previous section, this discarded information is irrelevant to the count of completions for a complete Sudoku grid. 

Before presenting the results of this section and their consequences, we introduce two concepts that will be used.

\begin{defi}
A \emph{multiset} is a collection in which repeated elements are allowed and count (\emph{multiplicity}), but the order remains irrelevant. 
\end{defi}

We will denote a multiset with the symbols $\;\llbracket\;$ and $\;\rrbracket$. For example, $\llbracket a, a, b \rrbracket = \llbracket a, b, a\rrbracket$ (equal as multisets), since they have the same
elements with the same multiplicities, but both are different from $ \llbracket a, b\rrbracket$. 

In our context, we will work with multisets $\llbracket P_1, P_2, P_3 \rrbracket$ of triples of partitions of $\{1,\ldots, 9\}$, representing the sets of digits in the columns of each block of a band (\textit{column partitions}), regardless of which partition came from which block (the order of the blocks is discarded), but recording whether two or three of the three partitions are equal. For example, the band
\begin{center}
\sudokubanda{
	1 , 2 , 3 , 4 , 8 , 9 , 7 , 5 , 6,
	4 , 5 , 6 , 7 , 2 , 3 , 1 , 8 , 9, 
	7 , 8 , 9 , 1 , 5 , 6 , 4 , 2 , 3
}
\end{center}
is valid, and in it the three column partitions coincide:
$P_1 = P_2 = P_3 = \mathcal{C}$. Its multiset is
$\llbracket \mathcal{C}, \mathcal{C}, \mathcal{C}\rrbracket$, with the standard partition
repeated three times.

\begin{defi}
We say that two multisets of triples of partitions are \emph{isomorphic} if one can be transformed into the other by a relabeling of the digits, that is, if there exists a bijection $\sigma\in S_9$ which, applied to all the elements of the triples of one of the multisets, produces the other.
\end{defi}

\begin{ex}
	Consider the band $G_1$:
	\begin{center}
	\sudokubanda{
			1 , 2 , 3 , 4 , 5 , 6 , 9 , 7 , 8,
			4 , 5 , 6 , 9 , 7 , 8 , 1 , 2 , 3 ,
			7 , 8 , 9 , 2 , 3 , 1 , 5 , 6 , 4
		}
	\end{center}
Its column partitions are
	\begin{align*}
	 \mathcal{C} &= \big\{\{1,4,7\},\ \{2,5,8\},\ \{3,6,9\}\big\},\\
		P_2 &= \big\{\{2,4,9\},\ \{3,5,7\},\ \{1,6,8\}\big\},\\
		P_3 &= \big\{\{1,5,9\},\ \{2,6,7\},\ \{3,4,8\}\big\},
	\end{align*}
forming the multiset $\llbracket \mathcal{C}, P_2, P_3 \rrbracket$. Now consider the band $G_2$: 
\begin{center}
	\sudokubanda{
		1 , 2 , 3 , 7 , 9 , 8 , 5 , 4 , 6,
		4 , 6 , 5 , 1 , 2 , 3 , 8 , 7 , 9 ,
		9 , 8 , 7 , 6 , 5 , 4 , 1 , 2 , 3
	}
\end{center}
Its column partitions are
\begin{align*}
	Q_1 &= \big\{\{1,4,9\},\ \{2,6,8\},\ \{3,5,7\}\big\},\\
	Q_2 &= \big\{\{1,6,7\},\ \{2,5,9\},\ \{3,4,8\}\big\},\\
	Q_3 &= \big\{\{1,5,8\},\ \{2,4,7\},\ \{3,6,9\}\big\},
\end{align*}
forming the multiset $\llbracket Q_1, Q_2, Q_3 \rrbracket \ne \llbracket \mathcal{C}, P_2, P_3 \rrbracket$.

Applying, to $G_1$, the relabeling $\sigma=(1\ 2)$, which swaps digits $1$ and $2$ and fixes the others, we obtain the band:	
	\begin{center}
	\sudokubanda{
	2 , 1 , 3 , 4 , 5 , 6 , 9 , 7 , 8,
	4 , 5 , 6 , 9 , 7 , 8 , 2 , 1 , 3 ,
	7 , 8 , 9 , 1 , 3 , 2 , 5 , 6 , 4
}
\end{center}
whose column partitions are
\begin{align*}
	P_1' &= \big\{\{2,4,7\},\ \{1,5,8\},\ \{3,6,9\}\big\}=Q_3,\\
	P_2' &= \big\{\{1,4,9\},\ \{3,5,7\},\ \{2,6,8\}\big\}=Q_1,\\
	P_3' &= \big\{\{2,5,9\},\ \{1,6,7\},\ \{3,4,8\}\big\}=Q_2.
\end{align*}
Since the multiset equality $\llbracket Q_1, Q_2, Q_3 \rrbracket=\llbracket P_1', P_2', P_3'\rrbracket$ holds, the multisets of triples associated with $G_1$ and $G_2$ are isomorphic.
\end{ex}

\subsection*{The invariance theorem}

\begin{teo} \label{teo:inv}
 The number of completions of a band to a complete Sudoku grid depends only on the isomorphism class of the multiset $\llbracket P_1, P_2, P_3\rrbracket$ of its column partitions.
\end{teo}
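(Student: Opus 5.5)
Let $N(B)$ denote the number of completions of a band $B$ (the first three rows) to a full Sudoku grid. I will show that $N(B)=N(B')$ whenever the multisets of column partitions of $B$ and $B'$ are isomorphic. The proof factors the comparison into three moves that each preserve $N$: reorder the cells within each column, permute the blocks, and relabel the digits. The claim is that these moves suffice to pass between any two bands with isomorphic multisets.

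\textbf{Step 1 (only the column sets matter).} A filling of rows $4$--$9$ completes $B$ exactly when three conditions hold. First, each of rows $4$--$9$ is a permutation of $\{1,\dots,9\}$. Second, bands $2$ and $3$ satisfy the block constraint. Third, for each column $j$, the six lower entries of column $j$ are distinct and avoid the set $S_j$ of digits occupying column $j$ in $B$. None of these conditions refers to the row positions inside $B$. The row constraints of rows $1$--$3$ and the block constraints of $B_1,B_2,B_3$ are already satisfied by $B$ itself, since $B$ is a valid band. Hence $N(B)$ is a function of the ordered $9$-tuple $(S_1,\dots,S_9)$ alone. This is exactly the general form of the sub-rectangle observation in Reduction~5, and I will state it as a lemma.

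\textbf{Step 2 (the tuple reduces to the unordered partitions).} Permuting the three columns inside a stack, or permuting the three stacks, and applying the same permutation to the completion, gives a bijection between the completions of the two bands. Row, column and block constraints are all preserved by such a permutation. So $N$ is invariant under reordering the $S_j$ within each block, meaning it depends only on each set partition $P_i$. It is also invariant under reordering the blocks, so it depends only on the multiset $\llbracket P_1,P_2,P_3\rrbracket$. Finally, applying $\sigma\in S_9$ to every digit of a band and of its completions is a bijection of completions. Since $\sigma$ acts on the multiset of partitions by acting on every element, $N$ is constant on isomorphism classes.

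\textbf{Main obstacle: realizability by a band.} In Step 2 I secretly used two facts. The first is that, given the multiset for $B'$, I can apply column, stack and relabeling moves to $B$ to obtain a band whose tuple $(S_j)$ matches that of $B'$ in order. The second is that Step 1 then identifies the two counts. The relabeling and reordering indeed carry the multiset of $B$ onto that of $B'$ and can be arranged to match the column sets position by position. Step 1 then handles the fact that the resulting band may differ from $B'$ in row arrangement. So no realizability question actually arises, because $B'$ is given as a genuine band. I expect the only delicate point to be writing Step 1 carefully, checking that the completion constraints genuinely involve $B$ only through the column sets. For the block constraints this holds because bands $2$ and $3$ contain disjoint cells from $B$.
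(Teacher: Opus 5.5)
Your proposal is correct and is essentially the paper's proof. It uses the same three completion-preserving bijections: the column-set criterion, permutations of columns within a stack or of whole stacks, and relabeling by $\sigma\in S_9$. Beyond the paper, you spell out how to chain them (relabel, permute stacks and columns to align the column sets position by position, then apply the column-set criterion), correctly noting that no realizability issue arises because $B'$ is a genuine band.
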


\begin{proof} It suffices to combine three bijections between sets of completions, all already established in the previous section:
\begin{enumerate}
	\item \emph{Column-set criterion.} Two bands with the same sets of digits in each column, column by column, have the same number of completions. The identity function on rows $4$--$9$ is a bijection between the completions, since the only information from the band that constrains the lower rows is the set of digits already used in each column.
	\item \emph{Permutations of columns or stacks.} Permuting the three whole stacks, or the columns within the same stack, is a symmetry of Sudoku: any completion is transported by applying the same column permutation to rows $4$--$9$.
	\item \emph{relabeling.} relabeling the $9$ digits by any bijection $\sigma\in S_9$ transports completions bijectively.
\end{enumerate}

\end{proof}

This theorem generalizes the column-by-column digit-set criterion from the previous section, discarding also the position of columns within each block, the order of the blocks, and the names of the digits, and requires neither standard form nor lexicographic reduction. Theorem~\ref{teo:inv} reduces the problem of classifying bands to the problem of classifying triples of partitions up to multiset isomorphism.

\subsection*{Counting column partitions}

The number of partitions of a set of $9$ elements into three blocks of
$3$ is
\[
\frac{9!}{(3!)^3 \times 3!} = 280,
\]
so that there are $280^2 = 78,400$ possible ordered pairs $(P_2, P_3)$.
A remarkable fact, verified computationally, is that all of them are realizable, as stated below.

\begin{prop}\label{prop:real}
For every pair of partitions $(P_2, P_3)$ of $\{1,\dots,9\}$ into three blocks of $3$, there exists a valid band whose first block has column partition $\mathcal{C}$ and whose other two blocks have column partitions $P_2$ and $P_3$.
\end{prop}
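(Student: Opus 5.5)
We plan to reformulate realizability as a small assignment problem and then settle it by a symmetry-reduced case analysis. Write $R_1=\{1,2,3\}$, $R_2=\{4,5,6\}$, $R_3=\{7,8,9\}$ for the rows of $B_1$ in the standard form, and let $r_1(d)$ be the row of $B_1$ containing digit $d$. A band with the required column partitions amounts to choosing rows $r_2(d)$ and $r_3(d)$ for each digit $d$ in $B_2$ and $B_3$. The row constraints of the band say exactly that, for every $d$, the rows $r_1(d),r_2(d),r_3(d)$ are pairwise distinct. So $r_2(d)\neq r_1(d)$ may be chosen freely, and then $r_3(d)$ is forced to be the third row. The column constraints say that $r_2$ is injective on each part of $P_2$ and $r_3$ is injective on each part of $P_3$. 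Counting shows that each row of $B_2$ and $B_3$ then automatically receives three digits. The statement is therefore equivalent to the solvability of this labeling problem.

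Next we encode the data as a graph. Form the bipartite multigraph $\Gamma$ whose left vertices are the three parts of $P_2$ and whose right vertices are the three parts of $P_3$. Each digit $d$ is an edge joining its $P_2$-part to its $P_3$-part, coloured by $r_1(d)\in\{1,2,3\}$. Then $\Gamma$ is $3$-regular with $9$ edges, and each colour is used exactly three times. We must orient each edge by choosing which of its two missing colours goes to its left end (this is $r_2$) and which goes to its right end (this is $r_3$). The requirement is that the three edges at every left vertex receive distinct left labels, and the three edges at every right vertex receive distinct right labels. This is a list-edge-colouring-type problem in which each edge has only two options, and it is invariant under the symmetries preserving $\mathcal{C}$. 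These are the wreath product $S_3\wr S_3$ of order $1{,}296$, together with swapping the roles of $P_2$ and $P_3$ (reversing each orientation) and the $S_3$ of colour permutations applied simultaneously to all labels.

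We then exploit structure. By König's theorem, $\Gamma$ decomposes into three perfect matchings. Up to isomorphism, a $3$-regular bipartite multigraph on $3+3$ vertices has only a handful of shapes: $K_{3,3}$, a triple edge plus a $2$-regular part, a doubled hexagon-type configuration, and so on. For each shape, we classify the colourings by colour classes of size $3$ modulo the symmetries above. In each case we exhibit an explicit orientation, typically by first orienting the edges at one vertex and propagating greedily along a perfect matching. When propagation fails, a local swap of the two labels on an alternating cycle repairs it.

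The main obstacle is the completeness of this case analysis. There is no obvious Hall-type condition that immediately guarantees the two-choice orientation exists, so the argument rests on listing all orbits of coloured $\Gamma$. I expect a few dozen orbits. The danger cases are those where a vertex's three edges share a colour, for instance all three edges at a left vertex coloured $1$. There the left labels must be exactly $\{2,3\}$-valued yet pairwise distinct, which is impossible. So the first thing to check is that this cannot happen. Indeed, it cannot: a part of $P_2$ consisting of three digits from the same row $R_i$ would force two of them into the same row of $B_2$. We will look for a general obstruction of this kind and show that it never arises, handling the residual orbits explicitly. As a safeguard, the finiteness of the orbit list makes a direct verification over the $78{,}400$ pairs, or over the orbit representatives, a legitimate fallback.
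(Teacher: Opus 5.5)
Your reformulation replaces the proposition by a strictly stronger statement, and that statement is false. By letting $r_1(d)$ be the row of $d$ in $B_1$ \emph{in the standard form}, you fix the rows of $B_1$ to be $\{1,2,3\},\{4,5,6\},\{7,8,9\}$. But the proposition prescribes only the \emph{column} partition $\mathcal{C}$ of $B_1$; its rows are part of what must be constructed.

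With the rows fixed, take $P_2=\mathcal{C}$ and $P_3=\{\{1,2,3\},\{4,5,6\},\{7,8,9\}\}$. The right vertex $\{1,2,3\}$ of $\Gamma$ carries three edges of colour $1$, so $r_3(1),r_3(2),r_3(3)$ would have to be three distinct elements of $\{2,3\}$. This is exactly your ``danger case'', and it is the example the paper discusses right after the proposition. The paper also records that only $44{,}496$ of the $78{,}400$ pairs are realizable with $B_1$ strictly in standard form, so subtler obstructions exist too.

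Your argument that the danger case ``cannot happen'' is circular. It infers that no part of $P_2$ lies inside a row $R_i$ from the existence of the very band you are trying to build, whereas $P_2,P_3$ are arbitrary inputs and such parts do occur. Relatedly, in your fixed-row setting $\mathcal{C}$ plays no role in the constraints, so the stabilizer $S_3\wr S_3$ of $\mathcal{C}$ is not a symmetry of your problem. For instance, $(1\,4)(2\,5)(3\,9)$ preserves $\mathcal{C}$ but maps the unsolvable pair above to a solvable one. The planned orbit-by-orbit analysis would therefore meet unsolvable cases, and your computational fallback would report failures.

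The missing idea is to treat $r_1$ as an unknown as well, subject only to being injective on each part of $\mathcal{C}$. This is the reformulation the paper uses before settling all $78{,}400$ pairs by exhaustive search. Once you do this, the conditions are symmetric in the three blocks, and your appeal to K\"onig's theorem works on the right graph with no case analysis.

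Write $P_1=\mathcal{C}$ and let $H$ be the bipartite graph whose left vertices are the nine digits and whose right vertices are the nine parts of $P_1,P_2,P_3$. Parts of different partitions are kept distinct even when equal as sets. Join $d$ to the part of each $P_i$ that contains it. Then $H$ is $3$-regular, so by K\"onig's edge-colouring theorem it has a proper $3$-edge-colouring.

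Read the colour of the edge from $d$ to its part in $P_i$ as $r_i(d)$. Properness at the part vertices says each $r_i$ is injective on each part of $P_i$. Properness at the digit vertices says $r_1(d),r_2(d),r_3(d)$ are pairwise distinct. These are precisely the conditions for a valid band with column partitions $(\mathcal{C},P_2,P_3)$. This would replace the paper's computer verification by a short proof that is valid for any three partitions.
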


The verification is direct using the reformulation: building a band with column partitions $(\mathcal{C}, P_2, P_3)$ is equivalent to assigning to each digit a row in each block, so that (i) in each block, the
three digits of the same column-set occupy distinct rows, and (ii)
each digit occupies distinct rows in the three blocks, since each row of the band must contain each digit exactly once. The realizability of a pair thus becomes a small finite problem, decided by exhaustive search in a fraction of a second; testing the $78,400$ pairs, none fails.\footnote{A second, independent verification uses Theorem~\ref{teo:44} itself: the pairs that occur in bands with $B_1$ strictly in standard form were enumerated via the $56$ possible divisions of the rows --- there are $44,496$ distinct pairs --- and the union of their equivalence classes under Theorem~\ref{teo:44} covers exactly the $78,400$ pairs; since images of realizable pairs under symmetries are realizable, all of them are. As a byproduct, this union organizes itself into exactly $44$ equivalence classes.}
	
Proposition~\ref{prop:real} establishes that the first block of the band has columns forming $\mathcal{C}$, but its rows are not necessarily those of the standard form. The freedom in the rows of $B_1$ is what allows all the realizations, as illustrated by the following example.
	
\begin{ex} 
Consider the pair 
$$ P_2 = \mathcal{C} \quad \text{ and } \quad P_3 = \{\{1,2,3\},\{4,5,6\},\{7,8,9\}\}.$$ 
This pair is impossible with $B_1$ exactly in the standard form: if the rows of $B_1$ are $(1,2,3)$,
$(4,5,6)$, $(7,8,9)$, a column of $B_3$ with the set $\{1,2,3\}$
would need to place one of these three digits in row $1$ of the band, but that row already contains all of them in $B_1$. The realization is given by the band:
\begin{center} 
\sudokubanda{
	1 , 2 , 9 , 7 , 5 , 6 , 3 , 4 , 8 ,
	4 , 5 , 3 , 1 , 8 , 9 , 2 , 6 , 7 ,
	7 , 8 , 6 , 4 , 2 , 3 , 1 , 5 , 9
}
\end{center}
Note that the columns of $B_1$ form $\mathcal{C}$, those of $B_2$ also
form $\mathcal{C}$, and those of $B_3$ form $P_3$, but the rows of $B_1$ are not those of the standard form.

\end{ex}

\subsection*{Counting classes: $131 \to 84 \to 44$}

Because of Theorem~\ref{teo:inv} and Proposition~\ref{prop:real}, classifying bands reduces to the
question: among the $78,400$ pairs $(P_2, P_3)$, how many correspond to different classes? 

In the context of triples of partitions, permuting rows and permuting columns within the same block cease to be a separate symmetry, since each partition is a set of sets; these equivalences are automatically absorbed by the invariant itself. Theorem~\ref{teo:inv} goes further: treating $\llbracket P_1, P_2, P_3\rrbracket$ as a multiset also accounts for the exchange between blocks, in particular the exchange of the block that plays the role of $B_1$, here called \emph{pivot change} in analogy with Subsection~\ref{subs:redB1}. Paralleling the first three reductions of Felgenhauer and Jarvis (Section~\ref{sec:contagem}), we analyze three levels of equivalence:

\begin{enumerate}
	\item \textbf{relabelings (that preserve $\mathcal{C}$).} Consider two pairs equivalent if a
	relabeling of the digits that preserves $\mathcal{C}$ transforms one into the other. 
	
	\item \textbf{relabelings + swap $B_2 \leftrightarrow B_3$.} In addition to relabeling, the order between $P_2$ and $P_3$ is allowed to be swapped: the pair is now treated as unordered.
	
	\item \textbf{Permutations involving $B_1$ (pivot change).} At this level, the object is the multiset of three partitions $\llbracket P_1, P_2, P_3\rrbracket$ up to relabeling, that is, exactly the invariant of Theorem~\ref{teo:inv}.
\end{enumerate}

Each level of equivalence divides the $78,400$ pairs into families of mutually equivalent pairs, and the counting of these families was done computationally.\footnote{The classic data structure for this type of counting is union--find.} The result is a cascade of three numbers:

\begin{center}
	\renewcommand{\arraystretch}{1.4}
	\begin{tabular}{|c|p{10cm}|}
		\hline
		\textbf{Families} & \textbf{Equivalence used} \\
		\hline
		$131$ & \emph{ordered} pairs $(P_2,P_3)$, modulo relabelings
		that preserve $\mathcal{C}$ \\
		$84$ & same, with \emph{unordered} pairs (swapping
		$B_2 \leftrightarrow B_3$) \\
		$44$ & same, with pivot change \\
		\hline
	\end{tabular}
\end{center}

That is, allowing the swap of blocks merges $131$ families into $84$; allowing the pivot change merges $84$ into $44$. Since the $44$ published numbers of completions are pairwise distinct, the $44$ families coincide exactly with the $44$ Felgenhauer--Jarvis classes. The number $44$ thus ceases to be an accident of the reduction process and acquires a combinatorial meaning. In summary:

\begin{teo}\label{teo:44}
The $44$ classes of the Sudoku enumeration are precisely the isomorphism classes of the multiset $\llbracket P_1, P_2, P_3\rrbracket$ of triples of partitions of a set of $9$ elements into three blocks of $3$, under bijections of the base set.
\end{teo}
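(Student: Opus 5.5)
We need to show two things. First, the map $\Phi$ sending a band to the isomorphism class of $\llbracket P_1,P_2,P_3\rrbracket$ is constant on each of the $44$ Felgenhauer--Jarvis classes. Second, $\Phi$ induces a bijection between those classes and the set $\mathcal{M}$ of isomorphism classes of multisets of three partitions of $\{1,\dots,9\}$ into three blocks of size $3$.

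We use the structure already set up in this section. By Theorem~\ref{teo:inv}, the completion count $n(\cdot)$ factors through $\Phi$: there is a function $\bar n:\mathcal{M}\to\mathbb{N}$ with $n=\bar n\circ\Phi$. The Felgenhauer--Jarvis classes are defined as the sets of catalog configurations sharing a value of $n$; this is how the $71$ classes collapsed to $44$, and the $44$ values $n_C$ in Table~\ref{tab:44classes} are pairwise distinct. So the theorem reduces to showing that $\bar n$ is injective on the image of $\Phi$, and that $\Phi$ is onto $\mathcal{M}$.

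\textbf{Surjectivity.} Take any multiset $\llbracket P_1,P_2,P_3\rrbracket$. Relabel so that $P_1=\mathcal{C}$; this is possible because $S_9$ acts transitively on the $280$ partitions. Proposition~\ref{prop:real} then supplies a valid band with column partitions $(\mathcal{C},P_2,P_3)$. That band is not yet in catalog form, since its $B_1$ need not be in standard form. Permuting the three rows of the band and relabeling digits brings $B_1$ to standard form. Both moves preserve the column partitions up to the same relabeling, so the resulting catalog configuration still maps to the given element of $\mathcal{M}$.

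\textbf{Counting $|\mathcal{M}|$.} Next, show $|\mathcal{M}|=44$. I would apply Burnside's Lemma to the action of $S_9$ on multisets of size $3$ drawn from the $280$ partitions. For each cycle type of $\sigma\in S_9$, count the multisets fixed by $\sigma$. A multiset is fixed either because $\sigma$ fixes each of its three partitions, or because $\sigma$ fixes one partition and swaps the other two, or because $\sigma$ permutes all three cyclically. So the count needs, for each cycle type, the number of partitions fixed by $\sigma$ and by $\sigma^2$ and $\sigma^3$. Enumerating this over the $30$ cycle types of $S_9$ is the main obstacle. The plan is to compute fixed partitions by considering how $\langle\sigma\rangle$-orbits on the $9$ points can assemble into $3$-blocks that are permuted among themselves. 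This is laborious but finite; as a fallback, the computational cascade $131\to84\to44$ above already gives $|\mathcal{M}|=44$ directly.

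\textbf{Conclusion.} With $|\mathcal{M}|=44$, surjectivity gives that $\Phi$ has image all of $\mathcal{M}$. Because $n=\bar n\circ\Phi$ takes exactly $44$ distinct values, $\bar n$ maps a $44$-element set onto $44$ values and is therefore a bijection. So two catalog configurations have the same completion count if and only if they have isomorphic multisets. This is exactly the statement that the Felgenhauer--Jarvis classes coincide with the isomorphism classes in $\mathcal{M}$.
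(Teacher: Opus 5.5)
Your proposal is correct and follows the paper's route. Theorem~\ref{teo:inv} makes the completion count factor through the multiset class; Proposition~\ref{prop:real}, plus a relabeling to put $B_1$ in standard form, realizes every class; $|\mathcal{M}|=44$ comes from the $131\to 84\to 44$ cascade or from Burnside; and the $44$ pairwise distinct values $n_C$ then force the induced map to be a bijection (a pigeonhole step you state more explicitly than the paper). The only difference is cosmetic: you count multisets fixed by $\sigma$ directly instead of letting $S_9\times S_3$ act on ordered triples as in Proposition~\ref{prop:adapB}, but since $\sigma$ has $(f(\sigma^2)-f(\sigma))/2$ two-cycles and $(f(\sigma^3)-f(\sigma))/3$ three-cycles on the $280$ partitions, your count equals $\tfrac16\big[f(\sigma)^3+3f(\sigma)f(\sigma^2)+2f(\sigma^3)\big]$, which is exactly the term the paper evaluates by hand over the cycle types to obtain $44$.
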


In the language of Group Theory, the three levels above are three group actions whose orbits are the families: the stabilizer of $\mathcal{C}$ in $S_9$ gives the $131$ ordered-pair orbits, its extension by the coordinate-swapping involution gives the $84$ orbits, and the action of all of $S_9$ on the multisets $\llbracket P_1, P_2, P_3\rrbracket$ gives the $44$ isomorphism classes of Theorem~\ref{teo:inv}.

This reformulation replaces the original chain of five heterogeneous reductions by Felgenhauer and Jarvis with a single invariant and three well-defined levels of equivalence, and makes it possible to compute $131$, $84$, and $44$ analytically, via Burnside's Lemma, without computational enumeration. From here on, we use ``class'' and ``orbit'' as synonyms.

\section{Symmetries and equivalence classes of Sudoku} \label{sec:burnside} 

The number $N_0 = 6,670,903,752,021,072,936,960$ distinguishes grids that could be considered equivalent, for example under a rotation or a relabeling of the digits. How many \emph{essentially different} grids are there? We formalize equivalence between grids via \emph{symmetries}, transformations that preserve validity, using Group Theory: essentially different grids correspond to orbits of a group action, and Burnside's Lemma is the tool for counting them. Russell and Jarvis \cite{Russel2006} carried out this count, with computational assistance, obtaining $5,472,730,538$ essentially different grids.

In this section, we present the ideas from \cite{Russel2006} and then apply the same tool to the alternative characterization from Section~\ref{sec:derivacao}: the number $44$ will be obtained by a closed derivation, without computational enumeration.

\subsection*{The symmetries of Sudoku}

A \emph{symmetry} of Sudoku is an operation that takes valid grids to
valid grids. The list, employed by Russell and Jarvis in \cite{Russel2006}, contains the following operations:
\begin{enumerate}
	\item relabeling (permuting) the $9$ digits;
	\item permuting the three stacks (block columns);
	\item permuting the three bands (block rows);
	\item permuting the three columns within the same stack;
	\item permuting the three rows within the same band;
	\item any rotation or reflection of the grid.
\end{enumerate}
Adler and Adler \cite{Adler2008} proved that the list is complete: every permutation of the $81$ cells that preserves the validity of grids is a finite composition of operations (2)--(6), which, combined with the relabelings from item (1), therefore exhaust the symmetries of Sudoku. The list still admits a simplification: in item (6), it suffices to keep the reflection across the main diagonal (the ``transposition'', analogous to the transpose of a matrix), since the rotations and the other reflections are obtained by composing it with operations from items (2)--(5).

Let us focus on the positional operations (2)--(6), leaving the relabeling (1) for separate treatment. Together, they generate a group $G$ of permutations of the $81$ cells, whose order can be computed directly. Operations (2)--(5) are independent of one another: a permutation of the $3$ stacks ($6$ ways), one of the $3$ bands ($6$ ways), a permutation of columns within each stack ($6$ ways per stack,
$6^3$ in all), and one of rows within each band ($6^3$ in all) can be freely combined, and distinct compositions produce distinct permutations of the cells, for a total of
\[
6 \times 6 \times 6^3 \times 6^3 = 6^8 = 1,679,616
\]
elements. The transposition does not belong to this set, but its square is the identity and its composition with operations (2)--(5) merely swaps the roles of rows and columns; it therefore doubles the group, without generating anything beyond that. Thus,
\[
|G| = 2 \times 6^8 = 3,359,232.
\]

\subsection*{Group actions and Burnside's Lemma}

The group $G$ \emph{acts} on the set of valid grids: each $g \in G$ transforms each valid grid $A$ into a valid grid $g(A)$. The \emph{orbit} of $A$ is the set $\{g(A) : g \in G\}$ of all grids reachable from $A$ by symmetries, and ``essentially different grids'' means, precisely, distinct orbits.

It remains to incorporate relabeling. We let $G$ act not on individual grids, but on \emph{relabeling classes}: two grids are identified from the outset if one is the other with the digits relabeled (there are $9! = 362,880$ relabelings). This action is well defined because positional operations and relabelings commute, so $g$ takes the whole class of $A$ into the class of $g(A)$, regardless of the chosen representative. The orbits of this action are the classes of essentially different grids in the full sense, encompassing all six operations.

Counting orbits one by one is infeasible, but Burnside's Lemma turns the problem into a count of fixed points:

\begin{lema}[Burnside's Lemma\footnote{The result is traditionally attributed to Burnside, but goes back to Cauchy and Frobenius, and for that reason also appears in the literature as the Cauchy--Frobenius Lemma.}]
	If a finite group $G$ acts on a finite set $X$, the number 	of orbits is the average, over the elements $g \in G$, of the number of points of $X$ fixed by $g$:
	\[
	|\{\text{orbits}\}| \;=\; \frac{1}{|G|} \sum_{g \in G}
	|\mathrm{Fix}(g)|, \qquad
	\mathrm{Fix}(g) = \{x \in X : g(x) = x\}.
	\]
\end{lema}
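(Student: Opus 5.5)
We will prove Burnside's Lemma by double counting the set of incidences
\[
\mathcal{F} = \{(g,x)\in G\times X : g(x)=x\}.
\]
Summing over the first coordinate, $|\mathcal{F}| = \sum_{g\in G}|\mathrm{Fix}(g)|$. Summing over the second coordinate instead, $|\mathcal{F}| = \sum_{x\in X}|\mathrm{Stab}(x)|$, where $\mathrm{Stab}(x)=\{g\in G: g(x)=x\}$ is the stabilizer of $x$. The whole argument is to rewrite this second sum as $|G|$ times the number of orbits.

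The key intermediate step is the orbit--stabilizer relation $|\mathrm{Orb}(x)|\cdot|\mathrm{Stab}(x)| = |G|$. First we check that $\mathrm{Stab}(x)$ is a subgroup of $G$, using the action axioms $e(x)=x$ and $(gh)(x)=g(h(x))$. Then we exhibit the bijection between left cosets and orbit points,
\[
g\,\mathrm{Stab}(x)\;\longmapsto\; g(x).
\]
The equality $g(x)=h(x)$ is equivalent to $h^{-1}g(x)=x$, that is, to $h^{-1}g\in\mathrm{Stab}(x)$. This shows the map is well defined and injective, and it is surjective by the definition of the orbit. Lagrange's theorem then gives $|\mathrm{Orb}(x)| = [G:\mathrm{Stab}(x)] = |G|/|\mathrm{Stab}(x)|$.

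Substituting this into the second sum gives $\sum_{x\in X}|\mathrm{Stab}(x)| = |G|\sum_{x\in X} 1/|\mathrm{Orb}(x)|$. Since the orbits partition $X$, we group the terms by orbit. Each orbit $\mathcal{O}$ contributes $|\mathcal{O}|$ terms, each equal to $1/|\mathcal{O}|$, so it contributes exactly $1$. The sum therefore equals the number of orbits. Equating the two expressions for $|\mathcal{F}|$ and dividing by $|G|$ yields the stated formula.

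There is no real obstacle here. The only point needing care is the orbit--stabilizer bijection, specifically its well-definedness on cosets and the fact that orbits partition $X$. The latter holds because "lying in the same orbit" is an equivalence relation, which again follows from the group axioms. Finiteness of $G$ and $X$ is used only so that all the sums are finite and the division is legitimate.
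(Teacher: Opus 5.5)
Your proof is correct and complete. It is the standard double-counting argument: you sum the incidence set $\{(g,x)\in G\times X : g(x)=x\}$ once over $g$ and once over $x$. You then turn $\sum_{x}|\mathrm{Stab}(x)|$ into $|G|$ times the number of orbits via the orbit--stabilizer bijection and by grouping terms along orbits.

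The paper gives no proof of this lemma to compare against. It quotes the lemma as a classical result, adding only an attribution footnote. It later states the Orbit--Stabilizer Theorem, also without proof. Your argument establishes both, so it supplies exactly what the paper takes for granted.
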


\subsection{Number of essentially different grids}\label{subsec:N}

Let $X$ be the set of relabeling classes of Sudoku grids. Applying Burnside's Lemma requires counting, for each $g \in G$, the number $|\mathrm{Fix}(g)|$ of classes fixed by $g$. For many symmetries this number is zero (for example, no grid is fixed, even up to relabeling, by a reflection across a row or column axis); for others, such as the $90^\circ$ rotation, some grids are recovered exactly after the rotation followed by a suitable relabeling. Russell and Jarvis computed \cite{Russel2006} that $|\mathrm{Fix}(g)| = 13,056$ relabeling classes are fixed by the quarter turn.

Summing over the $3,359,232$ elements of $G$ seems impractical, but conjugate elements ($g$ and $hgh^{-1}$, with $h \in G$) fix the same number of points,\footnote{If $g(x) = x$, then $(hgh^{-1})(h(x)) = h(g(x)) = h(x)$: the bijection $h$ takes the points fixed by $g$ exactly to the points fixed 	by $hgh^{-1}$, so the two sets have the same size.} so it suffices to compute $|\mathrm{Fix}(g)|$ for one representative of each conjugacy class. Russell and Jarvis, using the computer algebra system GAP, determined that $G$ has only $275$ conjugacy classes. For $248$ of these, no grid is fixed even up to relabeling; for the remaining $27$, the number of fixed grids was computed, class by class, by Ed Russell through computational brute force, using methods similar to those of the original count by Felgenhauer and Jarvis \cite{FelgenhauerJarvis2006}, with the complete table published in \cite{siteJarvis2}. Summing the $27$ non-zero contributions via Burnside's Lemma, Russell and Jarvis obtained:
\[
|\{\text{essentially different grids}\}| \;=\; 5,472,730,538.
\]

\subsection{Orbits and a $0.005\%$ discrepancy}\label{sec:disc}

The Orbit--Stabilizer Theorem relates the size of an orbit to the degree of symmetry of the object that generates it.

\begin{teo}[Orbit--Stabilizer]
	Let $\Gamma$ be a finite group acting on a finite set $Z$, and 	let $z \in Z$. The \emph{stabilizer} of $z$,
	\[
	\mathrm{Stab}(z) \;=\; \{\gamma \in \Gamma : \gamma(z) = z\},
	\]
	is a subgroup of $\Gamma$, and the size of the orbit of $z$ satisfies
	\[
	|\mathrm{Orbit}(z)| \;=\; \frac{|\Gamma|}{|\mathrm{Stab}(z)|}.
	\]
\end{teo}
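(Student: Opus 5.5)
We prove the Orbit--Stabilizer Theorem in two steps: first that $\mathrm{Stab}(z)$ is a subgroup of $\Gamma$, then that the orbit of $z$ is in bijection with the left cosets of $\mathrm{Stab}(z)$. Lagrange's theorem then gives the formula.

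\emph{Subgroup.} The identity fixes $z$, so $e \in \mathrm{Stab}(z)$. If $\gamma, \delta \in \mathrm{Stab}(z)$, the action axiom gives $(\gamma\delta)(z) = \gamma(\delta(z)) = \gamma(z) = z$, so the stabilizer is closed under products. Applying $\gamma^{-1}$ to both sides of $\gamma(z) = z$ gives $z = \gamma^{-1}(z)$, so it is closed under inverses. (Since $\Gamma$ is finite, closure under products would already suffice.)

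\emph{Bijection with cosets.} Write $H = \mathrm{Stab}(z)$ and define
\[
\Phi \colon \Gamma/H \to \mathrm{Orbit}(z), \qquad \Phi(\gamma H) = \gamma(z).
\]
The key equivalence is
\[
\gamma(z) = \delta(z) \iff \delta^{-1}\gamma(z) = z \iff \delta^{-1}\gamma \in H \iff \gamma H = \delta H .
\]
Read from right to left, it shows $\Phi$ is well defined, i.e. independent of the coset representative. Read from left to right, it shows $\Phi$ is injective. Surjectivity holds by the definition $\mathrm{Orbit}(z) = \{\gamma(z) : \gamma \in \Gamma\}$. Hence $|\mathrm{Orbit}(z)| = [\Gamma : H]$.

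\emph{Conclusion.} Lagrange's theorem says $\Gamma$ is partitioned into left cosets of $H$, each of size $|H|$ because $h \mapsto \gamma h$ is a bijection $H \to \gamma H$. Therefore $[\Gamma : H] = |\Gamma|/|H|$, which is the stated formula.

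There is no real obstacle. The only point needing care is the well-definedness of $\Phi$ on cosets, and the displayed chain of equivalences handles it together with injectivity. Finiteness of $\Gamma$ is used only to turn the index into the quotient $|\Gamma|/|\mathrm{Stab}(z)|$.
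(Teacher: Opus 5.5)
The paper quotes the Orbit--Stabilizer Theorem as a standard fact and gives no proof of its own, so there is no argument to compare against. Your proof is the usual textbook one and is correct and complete: you show $\mathrm{Stab}(z)$ is a subgroup, that $\gamma H \mapsto \gamma(z)$ is a well-defined bijection $\Gamma/H \to \mathrm{Orbit}(z)$, and then apply Lagrange. Your chain of equivalences handles well-definedness and injectivity in one stroke, and, as you note, finiteness of $\Gamma$ is used only to turn $[\Gamma:H]$ into $|\Gamma|/|\mathrm{Stab}(z)|$.
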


Applied to the action of $G$ on $X$, the maximum possible orbit size corresponds to the trivial stabilizer, and equals $|G| = 3,359,232$ relabeling classes. Since each relabeling class contains exactly $9!$ grids, the maximum orbit size in grids is
\[
|G| \times 9! \;=\; 3,359,232 \times 362,880 \;=\; 1,218,998,108,160,
\]
attained by the grids with no symmetry of their own (the majority of them). The average orbit size, in the same unit, is
\[
\frac{N_0}{5,472,730,538} \;\approx\; 1,218,935,174,261,
\]
about $0.005\%$ smaller than the maximum, as observed in \cite{Adler2008}, a difference entirely due to the rare exceptional classes. The class of the quarter turn, for example, has stabilizer of order at least $4$ (the four powers of the rotation), so its orbit has at most $1,218,998,108,160/4 = 304,749,527,040$ elements: since such classes are extremely rare, only a small fraction contribute to lowering the average, which explains why the discrepancy is so small.

\subsection{Burnside applied to the alternative derivation of the 44 classes}\label{subsec:B44} 

We return to the approach of Section~\ref{sec:derivacao} to compute the number $44$ without any computational enumeration. By Theorem~\ref{teo:44}, what we want to count are the isomorphism classes of the multiset $\llbracket P_1, P_2, P_3 \rrbracket$ of triples of partitions of $\{1,\dots,9\}$ into three blocks of $3$, up to relabeling of the digits. As in Section~\ref{subsec:N}, we reduce a Burnside sum to a small number of terms by grouping elements that act in the same way, now over a much smaller group and set, which makes it feasible to obtain the sum entirely by hand. We proceed in three steps: adapting Burnside's Lemma for unordered objects (Proposition~\ref{prop:adapB}); describing the elements that fix a multiset $\llbracket P_1, P_2, P_3 \rrbracket$ up to relabeling; and applying Burnside's Lemma, with the sum reduced to $30$ terms.

\subsubsection{Burnside's formula for unordered triples}

The subtlety in applying Burnside's Lemma here is that our object, the multiset $\llbracket P_1,P_2,P_3\rrbracket$, is unordered: besides relabeling the digits, we can freely reorder the three partitions, and two triples that differ only in order already count as the same object. The adaptation of the lemma consists of including these reorderings in the group of symmetries considered; that is what the following proposition does.

\begin{prop}\label{prop:adapB}
	Let $X$ be the set of $280$ partitions of $\{1,\dots,9\}$ into three
	blocks of $3$ and, for each relabeling $g \in S_9$, let $f(g)$ be the
	number of partitions $P \in X$ with $g(P) = P$. Then the number $N$ of
	multisets $\llbracket P_1,P_2,P_3\rrbracket$ of three elements
	of $X$ (repetitions allowed) up to relabeling, is
	\[
	N \;=\; \frac{1}{9! \times 6}\, \sum_{g \in S_9}
	\Big[\, f(g)^3 \;+\; 3\, f(g)\, f(g^2) \;+\; 2\, f(g^3) \,\Big].
	\]
\end{prop}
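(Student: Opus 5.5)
We apply Burnside's Lemma to the product group $\Gamma = S_9 \times S_3$, of order $9!\times 6$, acting on the set $X^3$ of \emph{ordered} triples $(P_1,P_2,P_3)$ of partitions. A pair $(g,\pi)$ acts by
\[
(g,\pi)\cdot(P_1,P_2,P_3) = \big(g(P_{\pi^{-1}(1)}),\, g(P_{\pi^{-1}(2)}),\, g(P_{\pi^{-1}(3)})\big).
\]
This is a well-defined action because relabeling the digits commutes with permuting coordinates.

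First we identify the orbits. Two ordered triples lie in the same $S_3$-orbit exactly when they define the same multiset $\llbracket P_1,P_2,P_3\rrbracket$. Hence $\Gamma$-orbits on $X^3$ correspond bijectively to multisets up to relabeling, and $N$ equals the number of $\Gamma$-orbits. Burnside's Lemma then gives
\[
N = \frac{1}{9!\times 6}\sum_{g\in S_9}\sum_{\pi\in S_3}|\mathrm{Fix}(g,\pi)|.
\]
It therefore suffices to show that, for fixed $g$, the inner sum equals $f(g)^3 + 3f(g)f(g^2) + 2f(g^3)$. We do this by splitting $S_3$ according to cycle type.

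\emph{Case $\pi=\mathrm{id}$.} The triple is fixed iff $g(P_i)=P_i$ for each $i$. The three coordinates are independent, so there are $f(g)^3$ fixed triples.

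\emph{Case $\pi$ a transposition, say swapping coordinates $1$ and $2$.} The fixed-point conditions are $g(P_3)=P_3$, $g(P_1)=P_2$ and $g(P_2)=P_1$. So $P_2$ is determined by $P_1$, and substituting gives $g^2(P_1)=P_1$. Conversely, any $P_1$ fixed by $g^2$, together with $P_2:=g(P_1)$, satisfies both conditions. This yields $f(g^2)$ choices for $(P_1,P_2)$ and $f(g)$ choices for $P_3$. The three transpositions contribute $3f(g)f(g^2)$ in total.

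\emph{Case $\pi$ a $3$-cycle.} The conditions force $P_2=g(P_1)$, $P_3=g(P_2)=g^2(P_1)$ and $P_1=g(P_3)=g^3(P_1)$. So the fixed triples correspond bijectively to the $P_1$ with $g^3(P_1)=P_1$. The two $3$-cycles contribute $2f(g^3)$.

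The only real obstacle is bookkeeping. One must fix a convention for the action so that it is a genuine left action, which is needed for Burnside to apply. One must also check that the \emph{direction} of the cycle does not matter. It does not: with $\pi^{-1}$ in place of $\pi$, the chain runs through $g$ in the other order but still closes up to the condition $g^3(P_1)=P_1$ (respectively $g^2(P_1)=P_1$ for a transposition), so the counts are unchanged. Finally, repeated entries in a multiset need no special handling, because the ordered-triple model already lets the $S_3$-orbit of a triple with repetitions be smaller. Summing the three cases over $g\in S_9$ gives the stated formula.
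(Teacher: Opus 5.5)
Your proposal is correct and follows essentially the same route as the paper: Burnside's Lemma for $S_9\times S_3$ acting on ordered triples in $X^3$. As in the paper, you split the fixed-point count by the type of the $S_3$ component, giving $f(g)^3$ for the identity, $f(g)f(g^2)$ for each of the three transpositions, and $f(g^3)$ for each of the two $3$-cycles.
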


\begin{proof} 
A multiset $\llbracket P_1,P_2,P_3\rrbracket$ is the same as an ordered triple $(P_1,P_2,P_3) \in X^3$ considered up to the $6$ permutations of its positions, that is, up to the action of $S_3$ on $X^3$ given by $\tau \cdot (P_1,P_2,P_3) = (P_{\tau^{-1}(1)}, P_{\tau^{-1}(2)},
P_{\tau^{-1}(3)})$.\footnote{The inverse in $\tau^{-1}(i)$ is what makes 	this formula a left action of $S_3$, satisfying $\tau_1\cdot(\tau_2\cdot v) = (\tau_1\tau_2)\cdot v$. Without the inverse, the
composition would come out in the wrong order.} A relabeling $g \in S_9$ acts on $X^3$ coordinate by coordinate,
$g \cdot (P_1,P_2,P_3) = (g(P_1),g(P_2),g(P_3))$. The two actions commute: reordering and then relabeling gives the same result as relabeling and then reordering. Hence, they combine into a single action of the direct product $S_9 \times S_3$ on $X^3$, with $g \in S_9$ relabeling digits and $\tau \in S_3$ reordering positions: a group of $9! \cdot 6$ elements.

Two ordered triples define the same multiset exactly when a reordering identifies them. Hence, two triples are in the same orbit of this action exactly when their multisets coincide up
to relabeling. Counting the multisets $\llbracket P_1,P_2,P_3\rrbracket$ up to relabeling is, therefore, the same as counting the number $N$ of orbits of $X^3$ under the action of $S_9 \times S_3$.

Burnside's Lemma applied to this group says that $N$ is the average, over the pairs $(g,\tau)\in S_9 \times S_3$, of the number of ordered triples fixed by $(g,\tau)$. It remains to count these fixed triples. The
count depends only on the type of $\tau$, of the three possible in $S_3$: identity, transposition, or $3$-cycle. In the examples below, we write $\mathcal{L} = \{\{1,2,3\},\{4,5,6\},\{7,8,9\}\}$.

\underline{Case $\tau = \mathrm{id}$} ($1$ element). Without reordering of positions, the triple is fixed if, and only if, each coordinate is fixed individually, that is, $g(P_i) = P_i$ for all $i$. The three coordinates do not interact: each one can be any of the $f(g)$ partitions fixed by $g$, independently of the others, resulting in $f(g)^3$ triples.

\emph{Example:} for $g = (1\ 2)$, a partition is fixed exactly when $1$ and $2$ share the same block. There are $7$ choices for the third element of that block, times $\binom{6}{3}/2 = 10$ ways of splitting the rest into two blocks, giving $f(g) = 70$ (this is type $2{+}1^7$ from Lemma~\ref{lema:fg}). The triple
\[
\big(\mathcal{L},\ \{\{1,2,4\},\{3,5,6\},\{7,8,9\}\},\
\{\{1,2,9\},\{3,4,5\},\{6,7,8\}\}\big)
\]
is fixed, as is any other choice of three of these $70$ partitions, with repetitions allowed: $70^3$ triples for this $g$.

\underline{Case $\tau$ a transposition} ($3$ elements). Suppose, without loss of generality, that the transposition swaps positions $2$ and $3$. The triple is fixed if, and only if, $g(P_1) = P_1$, $g(P_2) = P_3$, and $g(P_3) = P_2$. From the last two conditions, $P_3$ is determined by $P_2$ (namely, $P_3 = g(P_2)$). Substituting this expression into $g(P_3) = P_2$, we obtain $g^2(P_2) = P_2$ (the condition on $P_2$ involves $g^2$, not $g$). Hence there are $f(g)$ choices for $P_1$ and $f(g^2)$ for $P_2$: in total
$f(g)\,f(g^2)$ triples.

\emph{Example:} still with $g = (1\ 2)$, consider the triple $\big(\mathcal{L},\ \mathcal{C},\ g(\mathcal{C})\big)$, where $g(\mathcal{C}) = \{\{2,4,7\},\{1,5,8\},\{3,6,9\}\}$. Here $P_1=\mathcal{L}$
is fixed by $g$, as the condition on the first coordinate requires. $\mathcal{C}$ itself is not fixed by $g$, but $g$ merely swaps $\mathcal{C}$ and $g(\mathcal{C})$ with each other, and the transposition of positions exactly undoes that swap: the triple is fixed by the pair $(g,\tau)$. Since $g^2 = \mathrm{id}$, the second coordinate can be any of the $f(g^2) = f(\mathrm{id}) = 280$ partitions: there are $70 \times 280 = 19,600$ triples fixed for this pair.

\underline{Case $\tau$ a $3$-cycle} ($2$ elements). Without loss of generality, suppose $\tau = (1\,2\,3)$, so $\tau \cdot (P_1,P_2,P_3) = (P_3,P_1,P_2)$. The triple is fixed if, and only if, $g$ takes each coordinate to the next one, cyclically: $g(P_1) = P_2$, $g(P_2) = P_3$, $g(P_3) = P_1$. Thus, $P_2$ and $P_3$ are determined by $P_1$ (since $P_2 = g(P_1)$, $P_3 = g^2(P_1)$), and completing the cycle
requires $g^3(P_1) = P_1$: there are $f(g^3)$ triples.

\emph{Example:} take $g = (1\,2\,3\,4\,5\,6\,7\,8\,9)$, the $9$-cycle, whose cube is \linebreak
$g^3 = (1\,4\,7)(2\,5\,8)(3\,6\,9)$. The partition $\mathcal{L}$ is not fixed by $g$, but it is fixed by $g^3$ (which permutes its blocks cyclically), and the triple
\[
\big(\mathcal{L},\ g(\mathcal{L}),\ g^2(\mathcal{L})\big) = \]
\[ =\big(\mathcal{L},\ \{\{2,3,4\},\{5,6,7\},\{8,9,1\}\},\
\{\{3,4,5\},\{6,7,8\},\{9,1,2\}\}\big)
\]
is fixed by the pair $(g,\tau)$. For this $g$ there are $f(g^3) = 10$ fixed triples: one for each of the $10$ partitions fixed by $g^3$, of which $P_1$ can be any one.

Adding the three contributions with their multiplicities ($1$, $3$, and $2$), we obtain the formula. 

\end{proof}

\subsubsection{Computing $f(g)$}

The sum in Proposition~\ref{prop:adapB} runs over the $9! = 362,880$ elements of $S_9$, but $f(g)$ depends only on the type of $g$, whose definition we recall below, which makes the computation feasible.

\begin{defi}
	Every permutation $g \in S_9$ decomposes, uniquely, as a product of disjoint cycles whose lengths sum to $9$. The \emph{type} of $g$ is the list of these lengths, in non-increasing order, denoted here as a sum in which the exponent of each term indicates how many times it repeats.
\end{defi}

\emph{Example:} the permutation $g = (1\ 2)(3\ 4\ 5)$ (with $6,7,8,9$ fixed) has one cycle of length $2$, one of length $3$, and four fixed points (cycles of length $1$). Its type is $3{+}2{+}1^4$.

\begin{prop}\label{prop:tipo}
	$f(g)$ depends only on the type of $g$.
\end{prop}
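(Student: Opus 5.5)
The plan is to use the standard fact that two permutations of $S_9$ have the same type if and only if they are conjugate, and then to show that $f$ is a class function, that is, $f(hgh^{-1}) = f(g)$ for every $h \in S_9$. This is the same conjugation argument used earlier for Burnside's Lemma, now applied to the action of $S_9$ on the set $X$ of $280$ partitions.

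The first step is the conjugacy fact. If $g$ and $g'$ have the same type, we align their cycle decompositions, matching cycles of equal length. We then define $h$ by sending the $j$-th element of each cycle of $g$ to the $j$-th element of the corresponding cycle of $g'$. A direct check gives $hgh^{-1} = g'$, because $hgh^{-1}$ sends $h(a)$ to $h(g(a))$ and therefore carries each cycle $(a_1\,\dots\,a_k)$ to $(h(a_1)\,\dots\,h(a_k))$. Since this is a classical fact, I would simply recall it with this one-line justification.

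The second step is to prove $f(hgh^{-1}) = f(g)$. The relabeling action of $S_9$ on $X$ is a genuine action, with $(gh)(P) = g(h(P))$, because relabeling a set of sets acts elementwise on each block. Given this, the map $P \mapsto h(P)$ is a bijection of $X$ onto itself: it sends three disjoint $3$-sets covering $\{1,\dots,9\}$ to three disjoint $3$-sets covering $\{1,\dots,9\}$, and its inverse is induced by $h^{-1}$. It also carries $\mathrm{Fix}(g)$ onto $\mathrm{Fix}(hgh^{-1})$. Indeed, if $g(P)=P$, then $hgh^{-1}(h(P)) = h(g(P)) = h(P)$. The reverse inclusion follows by applying the same argument with $h^{-1}$ in place of $h$ and $hgh^{-1}$ in place of $g$. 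So the two fixed sets have the same cardinality.

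The two steps together give the proposition. The only point requiring any care is verifying that $h$ maps $X$ bijectively onto $X$ and respects composition, so that the conjugation identity holds at the level of partitions. This is routine, so I do not expect a real obstacle. As a sanity check I would note that the example $f((1\,2)) = 70$ is consistent: every transposition is conjugate to $(1\,2)$, and by the symmetric count it also fixes exactly $70$ partitions.
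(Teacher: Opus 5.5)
Your proof is correct and follows essentially the same route as the paper: show that permutations of the same type are conjugate via a bijection $h$ matching their cycles, then observe that $P \mapsto h(P)$ carries $\mathrm{Fix}(g)$ onto $\mathrm{Fix}(hgh^{-1})$. Your explicit checks of the reverse inclusion (applying the argument to $h^{-1}$) and of the fact that $h$ permutes $X$ bijectively fill in small details the paper leaves implicit.
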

\begin{proof}
	If $g_1$ and $g_2$ have the same type, then they are conjugate in $S_9$: since the type records only the lengths of the cycles, not the elements that make them up, there exists a bijection $h$ of $\{1,\dots,9\}$ 	that maps the cycles of $g_1$, one by one, onto the corresponding cycles of $g_2$, satisfying $g_2 = h g_1 h^{-1}$. This same bijection $h$ maps each partition fixed by $g_1$ to a partition fixed by $g_2$, since if $g_1(P)=P$, then $g_2(h(P)) = hg_1h^{-1}(h(P)) = h(g_1(P)) = h(P)$. Thus, a bijection between $\mathrm{Fix}(g_1)$ and $\mathrm{Fix}(g_2)$ is established. Hence, $f(g_1)=f(g_2)$.
\end{proof}

\emph{Example:} $(1\ 2)$ fixes the partitions in which $1$ and $2$ share a block; $(5\ 7)$, those in which $5$ and $7$ share a block. The bijection $h=(1\,5)(2\,7)$ takes one family to the other, and both have $70$ elements.

The possible types are the partitions of the integer $9$, that is, the ways of writing it as a sum of positive integers, regardless of order (not to be confused with the set partitions also present
in this paper). There are exactly $30$ partitions of $9$, listed in Table~\ref{tab:f-tipos}.

Let $S_\lambda = \{g \in S_9 : g \text{ has type } \lambda\}$ be the set of relabelings of type $\lambda = k_1^{m_1}+\cdots+k_r^{m_r}$. The number of relabelings of that type is given by the formula
\begin{equation}\label{eq:S_lambda}
	|S_\lambda| \;=\; \frac{9!}{\displaystyle\prod_{i=1}^r k_i^{m_i}\, m_i!}.
\end{equation}
Indeed, lining up the $9$ digits in a row ($9!$ ways) and grouping them into cycles according to the type, each 
cycle of length $k$ can start at any of its $k$ elements, and cycles of equal length can be listed in any order. Hence, the permutation is produced 
$\prod_{i=1}^r k_i^{m_i}\, m_i!$ times, hence the division. For example, type $2{+}1^7$ gives $9!/(2\cdot 7!) = 36 = \binom{9}{2}$, which is the total number of transpositions.

Grouping the $9! = 362,880$ terms of the sum in Proposition~\ref{prop:adapB} by the $30$ types, it reduces to just $30$ terms, each of the form
\[
|S_\lambda| \;\times\; \big(\text{common value of } f \text{ on }
S_\lambda\big),
\]
so it remains to find this common value from the type $\lambda$, for which it is worth understanding the structure behind $f(g)$.

If $g$ fixes a partition $P = \{B_1,B_2,B_3\}$, then $g$ takes blocks to blocks: the image of each $B_i$ is some $B_j$. Since there are only three blocks, this correspondence (a permutation of the blocks induced by $g$)
can only be of three types: fixing all three, swapping two and fixing the third, or permuting them cyclically. An example of each case is given below.

\begin{ex}\label{ex:ABC}
	Consider $\mathcal{C} = \{\{1,4,7\},\{2,5,8\},\{3,6,9\}\}$ and \linebreak $\mathcal{L}=\{\{1,2,3\},\{4,5,6\},\{7,8,9\}\}$.
	\begin{itemize}
		\item (Blocks individually fixed). For $g = (1\ 2)(4\ 5)$, the partition $\mathcal{L}$ is fixed: each block of $\mathcal{L}$ is a union of complete cycles of $g$. For example, $\{1,2,3\}$ gathers the cycle $(1\ 2)$ and the fixed point $3$.
		
		\item (Two blocks swapped). For $g = (1\ 2)(4\ 5)(7\ 8)$, we have $g(\{1,4,7\}) = \{2,5,8\}$ and $g(\{2,5,8\}) = \{1,4,7\}$, while $g(\{3,6,9\}) = \{3,6,9\}$. Hence $g$ fixes $\mathcal{C}$.
		
		\item (All three blocks permuted cyclically). For \linebreak $g = (1\,2\,3)(4\,5\,6)(7\,8\,9)$, we have $\{1,4,7\} \to \{2,5,8\} \to \{3,6,9\} \to \{1,4,7\}$. Hence $g$ also fixes $\mathcal{C}$.
	\end{itemize}
\end{ex}

These are exactly the three cases that can occur, each contributing separately to $f(g)$:

\begin{lema}\label{lema:fg}
	For $g \in S_9$,
	\[
	f(g) \;=\; A(g) + B(g) + C(g),
	\]
	where $A(g)$, $B(g)$, and $C(g)$ are, respectively, the number of 	partitions fixed by $g$ in which the blocks are individually fixed, in which two blocks are swapped with each other, and in which the three blocks are permuted cyclically.
\end{lema}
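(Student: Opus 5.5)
The plan is to show that the three quantities $A(g)$, $B(g)$, $C(g)$ count the members of a partition of $\mathrm{Fix}(g)$ into three disjoint subsets. The lemma then follows by adding up the sizes. The argument is elementary and splits into two steps: first make precise the ``induced permutation of the blocks'', then classify that permutation by its cycle type in $S_3$.

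\emph{Step 1 (induced permutation).} Let $P=\{B_1,B_2,B_3\}\in X$ with $g(P)=P$. I will show that $g$ sends each block $B_i$ to a block of $P$.

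\begin{itemize}
\item Since $g$ is a bijection of $\{1,\dots,9\}$, the image $g(B_i)$ is a $3$-element set.
\item The equality $g(P)=P$ means precisely that $\{g(B_1),g(B_2),g(B_3)\}=\{B_1,B_2,B_3\}$. Hence each $g(B_i)$ equals some $B_j$.
\item Distinct blocks are disjoint and $g$ is injective, so distinct blocks have distinct images.
\end{itemize}

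Therefore $B_i\mapsto g(B_i)$ defines a permutation $\pi_P(g)$ of the three-element set $P$. Conversely, if $g$ maps every block of $P$ onto a block of $P$, then $g(P)=P$. So $\mathrm{Fix}(g)$ is exactly the set of partitions whose blocks are permuted among themselves by $g$.

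\emph{Step 2 (classification).} The permutation $\pi_P(g)$ is an element of the symmetric group on three objects. Every element of that group has exactly one of three cycle types:
\begin{itemize}
\item the identity, with all blocks fixed, which is case $A$;
\item a transposition, where two blocks are swapped and the third is fixed, which is case $B$;
\item a $3$-cycle, where the blocks are permuted cyclically, which is case $C$.
\end{itemize}
The map $P\mapsto\pi_P(g)$ is well defined for each fixed $P$. Hence each $P\in\mathrm{Fix}(g)$ lies in exactly one of the three subsets $\mathrm{Fix}_A(g)$, $\mathrm{Fix}_B(g)$, $\mathrm{Fix}_C(g)$, defined according to the cycle type of $\pi_P(g)$. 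These subsets are pairwise disjoint and their union is $\mathrm{Fix}(g)$. Taking cardinalities gives $f(g)=A(g)+B(g)+C(g)$.

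The only point needing care is that the case assigned to $P$ must not depend on how the blocks are labeled. This holds because the cycle type of $\pi_P(g)$ is intrinsic to the set $P$: relabeling the blocks as $B_1,B_2,B_3$ conjugates $\pi_P(g)$ in $S_3$, and conjugation preserves cycle type. This is the only step I would state explicitly. Beyond it there is no real obstacle; the substantive work, computing $A$, $B$, $C$ from the type of $g$, comes after the lemma.
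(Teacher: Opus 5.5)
Your proposal is correct and follows the same route as the paper, which likewise notes that for $g(P)=P$ each block is sent to a block, so $g$ induces a permutation $\bar g$ of the three blocks, and the three possible types of $\bar g$ sort $\mathrm{Fix}(g)$ into disjoint classes. You simply make explicit what the paper's short proof leaves implicit: that images of blocks are distinct blocks (injectivity), and that the cycle type of the induced permutation does not depend on how the blocks are labeled.
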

\begin{proof}
	If $g(P) = P$, the image of each block is a block, and $g$ induces a permutation $\bar g$ of the three blocks. Each fixed partition falls into exactly one of these cases (the case is determined by $\bar g$), so the three counts add up without overlap. 
\end{proof}

It remains to obtain, for each case, an explicit formula in terms of the cycle type of $g$. 
	
	\underline{Case $\bar g = \mathrm{id}$.} Each block is a set of $3$ elements invariant under $g$, and an invariant set is always a union of complete cycles of $g$, as in Example~\ref{ex:ABC}, where $\{1,2,3\}$ gathers the cycle $(1\ 2)$ and the fixed point $3$. Splitting the
	$9$ digits into three invariant blocks is, therefore, the same as splitting the cycles of $g$ into three groups of total length $3$: a single $3$-cycle, a $2$-cycle plus a fixed point, or three fixed points. This gives exactly $A(g)$ partitions. In particular, any cycle of length greater than $3$ makes $A(g) = 0$.
	
	\underline{Case $\bar g$ a transposition.} Say $g$ fixes $B_1$ and swaps $B_2 \leftrightarrow B_3$, as in Example~\ref{ex:ABC}. Block $B_1$ is invariant, so it is a union of cycles of $g$ of total length $3$.
	
	It remains to determine $B_2$ and $B_3$. Together they form an invariant set of $6$ elements, a union of cycles of $g$; the question is how these $6$ elements split between the two blocks. For illustration, take $g = (1\ 2)(4\ 5)(7\ 8)$ itself, from Example~\ref{ex:ABC}, with $B_1 = \{3,6,9\}$. The cycle $(1\ 2)$ links $1$ and $2$: since $g$ swaps $B_2$ and $B_3$, if $1$ is in $B_2$ then $g(1)=2$ must be in $B_3$, and vice versa. That is, the cycle $(1\ 2)$ offers exactly two ways of splitting its elements between $B_2$ and $B_3$ ($1\in B_2, 2\in B_3$, or the opposite), and the same holds, independently, for the cycles $(4\ 5)$ and $(7\ 8)$. This already reveals why only cycles of even length can take part in this alternation.
	
	In general, a cycle of even length $2m$ splits into two ``alternation classes'' of $m$ elements each. In Example~\ref{ex:ABC}, the cycle $(1\ 2)$ (with $m=1$) has the classes $\{1\}$ and $\{2\}$, and the choice of which class goes to $B_2$ is free and independent for each cycle. With $k$ cycles, this gives $2^k$ choices. In Example~\ref{ex:ABC}, $k=3$ and $2^3 = 8$. But each choice and its opposite (swapping all classes simultaneously) produce the same partition, only with the
	names $B_2$ and $B_3$ reversed: the $8$ choices group into $4$ pairs, giving $2^{k-1}$ distinct partitions for this fixed block. In the example, these are the $4$ partitions:
	\[
	\{1,4,7\}\{2,5,8\}\{3,6,9\}, \quad
	\{1,4,8\}\{2,5,7\}\{3,6,9\}, \]
\[	\{1,5,7\}\{2,4,8\}\{3,6,9\}, \quad
	\{1,5,8\}\{2,4,7\}\{3,6,9\}.
	\]
	It remains to vary which set of cycles forms the fixed block $B_1$: for each choice of cycles of total length $3$ such that all remaining cycles have even length, there are $2^{k-1}$ partitions, where $k$ is the number of those remaining cycles. Summing over all such choices gives $B(g)$.

\underline{Case $\bar g$ a $3$-cycle.} Say $g$ takes $B_1 \to B_2 \to B_3 \to B_1$, as in Example~\ref{ex:ABC}, with $g = (1\,2\,3)(4\,5\,6)(7\,8\,9)$.

Take the cycle $(1\,2\,3)$. Since $g$ takes $B_1$ to $B_2$, to $B_3$, and back to $B_1$, the digit $1$ and its image $g(1)=2$ cannot be in the same block: if $1 \in B_1$, then $2 = g(1) \in B_2$, $3 = g(2) \in B_3$, and $g(3)=1\in B_1$. That is, the entire cycle $(1\,2\,3)$ splits, in order, among $B_1$, $B_2$, and $B_3$: one ``alternation class'' for each block (now with period 3), here of a single element each. Once we choose which digit of the cycle goes to $B_1$ (three options: $1$, $2$, or $3$), the other two are determined. The same reasoning, independently, applies to the cycles $(4\,5\,6)$ and $(7\,8\,9)$.

In general, a cycle of length $3m$ splits into three alternation classes of $m$ elements each. In Example~\ref{ex:ABC}, cycles of length $3$ have singleton classes, and the choice of which class goes
to $B_1$ is free and independent for each cycle. That is why every cycle of $g$ needs to have length divisible by $3$, and, if any does not, $C(g) = 0$. Since $9$ can only be written as a sum of multiples of $3$ in three ways, $3{+}3{+}3$, $6{+}3$, and $9$, these are the only types with $C(g) \neq 0$.

With $k$ cycles, there are $3^k$ choices, and each determines an ordered triple $(B_1,B_2,B_3)$. In the example, $k=3$ and $3^3 = 27$ triples. But the partition does not distinguish which block we call ``$B_1$'': cyclically permuting the labels $B_1,B_2,B_3$ ($3$ ways) always gives the same set of three blocks, only relabeled. Thus, each partition is counted three times in this list of $27$, leaving $3^{k-1}$ distinct partitions. In the example, $27/3 = 9$; they are
\begin{gather*}
	\{1,4,7\}\{2,5,8\}\{3,6,9\}, \;
	\{1,4,8\}\{2,5,9\}\{3,6,7\}, \;
	\{1,4,9\}\{2,5,7\}\{3,6,8\}, \\
	\{1,5,7\}\{2,6,8\}\{3,4,9\}, \;
	\{1,5,8\}\{2,6,9\}\{3,4,7\}, \;
	\{1,5,9\}\{2,6,7\}\{3,4,8\}, \\
	\{1,6,7\}\{2,4,8\}\{3,5,9\}, \;
	\{1,6,8\}\{2,4,9\}\{3,5,7\}, \;
	\{1,6,9\}\{2,4,7\}\{3,5,8\}.
\end{gather*}
This same $g$ also fixes a tenth partition, $\mathcal{L}$, but with the three blocks individually invariant, the case $\bar g = \mathrm{id}$: it does not enter $C(g)$, but rather $A(g)$.

\paragraph{The table of $f$.} Applying Lemma~\ref{lema:fg} to each of the $30$ types, we obtain Table~\ref{tab:f-tipos}, where each row corresponds to a type $\lambda$, and the columns $A(g)$ through $f(g)$ are computed for any representative $g$ of $\lambda$, a value independent of the choice by Proposition~\ref{prop:tipo}. 

We present below three representative examples, each with a concrete permutation of the type.

\begin{itemize}
	\item \underline{Type $2{+}1^7$.} Say $g = (1\ 2)$. 
	
	Here $C(g) = 0$, since the lengths $2$ and $1$ are not divisible by $3$, and $B(g) = 0$, since any candidate fixed block of total length $3$ (either $\{2{+}1\}$ or $\{1^3\}$) leaves an odd cycle among the rest, violating the parity requirement.
	
	What remains is $A(g)$: an invariant partition needs to place $1$ and $2$ in the same block. The block $\{1,2,x\}$ has $7$ choices for $x$, and the remaining $6$ digits split into two blocks in $\binom{6}{3}/2 = 10$ ways: $A(g) = 7 \times 10 = 70$. 
	
	Hence, $f(g) = 70+0+0=70$.
	
	\item \underline{Type $2^4{+}1$.} Say $g = (1\ 2)(3\ 4)(5\ 6)(7\ 8)$. 
	
	Here $A(g) = 0$: invariant blocks would be $\{2{+}1\}$ or $\{1^3\}$, but there is a single fixed point, so only one of the four $2$-cycles could form a block, the others could not.
	
	Also $C(g) = 0$, since there are cycles in $g$ whose length is not a multiple of $3$. 
	
	For $B(g)$: the fixed block must be a $2$-cycle plus the fixed point $9$ ($4$ choices for the $2$-cycle); three $2$-cycles remain, all even, giving $2^{3-1} = 4$ partitions each. For example, with the fixed block $\{7,8,9\}$, each of the other two blocks takes one element from each remaining $2$-cycle, and the $4$ possibilities are:
	\[
	\{1,3,5\}\{2,4,6\}, \; \{1,3,6\}\{2,4,5\}, \;
	\{1,4,5\}\{2,3,6\}, \; \{1,4,6\}\{2,3,5\}.
	\]
	In all, $B(g) = 4 \times 4 = 16$. 
	
	Hence, $f(g) = 0+16+0=16$.
	
	\item \underline{Type $6{+}3$.} Say
	$g = (1\,2\,3\,4\,5\,6)(7\,8\,9)$. 
	
	Here $A(g) = 0$: the $6$-cycle does not fit in any block. 
	
	For $B(g)$: the fixed block can only be the $3$-cycle, $\{7,8,9\}$. The $6$-cycle remains, even, with $2^{1-1} = 1$ partition. Its two alternation classes (period $2$) are $\{1,3,5\}$ and $\{2,4,6\}$. 
	
	For $C(g)$: both lengths are divisible by $3$ and there are $k=2$ cycles, giving $3^{2-1} = 3$ partitions. Now, each block takes an alternation class of the $6$-cycle, of period $3$ ($\{1,4\}$, $\{2,5\}$, or $\{3,6\}$), plus one digit of the $3$-cycle, and the choice of which digit accompanies $\{1,4\}$ determines the rest. 
	
	Hence $f(g) = 0+1+3 = 4$. The complete list follows:
\begin{gather*}
	\{1,3,5\}\{2,4,6\}\{7,8,9\}, \quad
	\{1,4,7\}\{2,5,8\}\{3,6,9\}, \\
	\{1,4,8\}\{2,5,9\}\{3,6,7\}, \quad
	\{1,4,9\}\{2,5,7\}\{3,6,8\},
\end{gather*}
	the first coming from case $B(g)$ and the last three from case $C(g)$. 
\end{itemize}

\begin{table}
	\centering
	\renewcommand{\arraystretch}{1.12}
	\small
	\begin{tabular}{|l|r|r|r|r|r|}
		\hline
		Type $\lambda$ of $g$ & $|S_\lambda|$ & $A(g)$ & $B(g)$ & $C(g)$ & $f(g)$ \\
		\hline
		$1^9$               & $1$      & $280$ & $0$  & $0$ & $280$ \\
		$2{+}1^7$           & $36$     & $70$  & $0$  & $0$ & $70$  \\
		$2^2{+}1^5$         & $378$    & $20$  & $0$  & $0$ & $20$  \\
		$3{+}1^6$           & $168$    & $10$  & $0$  & $0$ & $10$  \\
		$2^3{+}1^3$         & $1,260$  & $6$   & $4$  & $0$ & $10$  \\
		$3{+}2{+}1^4$       & $2,520$  & $4$   & $0$  & $0$ & $4$   \\
		$4{+}1^5$           & $756$    & $0$   & $0$  & $0$ & $0$   \\
		$2^4{+}1$           & $945$    & $0$   & $16$ & $0$ & $16$  \\
		$3{+}2^2{+}1^2$     & $7,560$  & $2$   & $0$  & $0$ & $2$   \\
		$3^2{+}1^3$         & $3,360$  & $1$   & $0$  & $0$ & $1$   \\
		$4{+}2{+}1^3$       & $7,560$  & $0$   & $2$  & $0$ & $2$   \\
		$5{+}1^4$           & $3,024$  & $0$   & $0$  & $0$ & $0$   \\
		$3{+}2^3$           & $2,520$  & $0$   & $4$  & $0$ & $4$   \\
		$3^2{+}2{+}1$       & $10,080$ & $1$   & $0$  & $0$ & $1$   \\
		$4{+}2^2{+}1$       & $11,340$ & $0$   & $4$  & $0$ & $4$   \\
		$4{+}3{+}1^2$       & $15,120$ & $0$   & $0$  & $0$ & $0$   \\
		$5{+}2{+}1^2$       & $18,144$ & $0$   & $0$  & $0$ & $0$   \\
		$6{+}1^3$           & $10,080$ & $0$   & $1$  & $0$ & $1$   \\
		$3^3$               & $2,240$  & $1$   & $0$  & $9$ & $10$  \\
		$4{+}3{+}2$         & $15,120$ & $0$   & $2$  & $0$ & $2$   \\
		$4^2{+}1$           & $11,340$ & $0$   & $0$  & $0$ & $0$   \\
		$5{+}2^2$           & $9,072$  & $0$   & $0$  & $0$ & $0$   \\
		$5{+}3{+}1$         & $24,192$ & $0$   & $0$  & $0$ & $0$   \\
		$6{+}2{+}1$         & $30,240$ & $0$   & $1$  & $0$ & $1$   \\
		$7{+}1^2$           & $25,920$ & $0$   & $0$  & $0$ & $0$   \\
		$5{+}4$             & $18,144$ & $0$   & $0$  & $0$ & $0$   \\
		$6{+}3$             & $20,160$ & $0$   & $1$  & $3$ & $4$   \\
		$7{+}2$             & $25,920$ & $0$   & $0$  & $0$ & $0$   \\
		$8{+}1$             & $45,360$ & $0$   & $0$  & $0$ & $0$   \\
		$9$                 & $40,320$ & $0$   & $0$  & $1$ & $1$   \\
		\hline
	\end{tabular}
	\caption{The $30$ types in $S_9$, with the size $|S_\lambda|$ of each class given by \eqref{eq:S_lambda} and the decomposition $f(g) = A(g) + B(g) + C(g)$ from Lemma~\ref{lema:fg}.}
	\label{tab:f-tipos}
\end{table}

It is worth understanding the pattern of zeros in Table~\ref{tab:f-tipos}, which has eleven rows with $f(g) = 0$. A cycle of length $5$, $7$, or $8$ zeroes out all three terms at once: it does not fit in an invariant block (length greater than $3$, zeroing $A(g)$); it cannot take part in the alternation between two blocks ($5$ and $7$ because they are odd, $7$ and $8$ also because they exceed the $6$ elements available in the two blocks), zeroing $B(g)$; and none of the three is divisible by $3$, zeroing $C(g)$. This eliminates the eight types that contain one of these lengths. The three remaining null rows, $4{+}1^5$, $4{+}3{+}1^2$, and $4^2{+}1$, fall for other reasons: the $4$ zeroes $A(g)$; in the analysis of $B(g)$, every choice of fixed block leaves behind some odd cycle (a fixed point or the $3$-cycle), or there is no possible choice at all; and no type has all its lengths divisible by $3$, zeroing $C(g)$.

Note that the sum $\sum_\lambda |S_\lambda| \times f(g)$ over Table~\ref{tab:f-tipos} gives exactly
$362,880 = 9!$. This is what Burnside's Lemma predicts when applied to $S_9$ acting on $X$ alone, since the $280$ partitions form a single orbit under relabeling.

\subsubsection{The powers and the final sum}

The formula from Proposition~\ref{prop:adapB} also calls for $f(g^2)$ and $f(g^3)$, read off the same Table~\ref{tab:f-tipos}. A classical fact about powers is useful: the $m$-th power of a cycle of length $k$ decomposes into $\mathrm{gcd}(k,m)$ cycles of length $k/\mathrm{gcd}(k,m)$. For example, $(1\,2\,3\,4\,5\,6)^2= (1\,3\,5)(2\,4\,6)$ and $(1\,2\,3\,4\,5\,6)^3=(1\,4)(2\,5)(3\,6)$.

Applying this fact cycle by cycle, the type of $g^m$ can be read directly from the type of $g$. For example, if $g$ has type $6{+}2{+}1$, then $g^2$ has type $3^2{+}1^3$ (the $6$-cycle becomes two $3$-cycles; the $2$-cycle, two fixed points) and $g^3$ has type $2^4{+}1$ (the $6$-cycle becomes three $2$-cycles; the $2$-cycle stays the same).

Before summing, one observation saves work: the term for a type is $f(g)^3 + 3 f(g) f(g^2) + 2 f(g^3)$, which vanishes whenever $f(g) = 0$ and $f(g^3) = 0$. This immediately discards $11$ of the $30$ types: all those that contain a cycle of length $5$, $7$, or $8$, plus $4{+}1^5$, $4{+}3{+}1^2$, and $4^2{+}1$ (whose cubes have types $4{+}1^5$, $4{+}1^5$, and $4^2{+}1$, all with $f = 0$). This leaves $19$ terms, gathered in Table~\ref{tab:soma-final}.

\begin{table}
	\centering
	\renewcommand{\arraystretch}{1.12}
	\footnotesize
	\begin{tabular}{|l|r|l|l|r|r|r|r|}
		\hline
		Type $\lambda$ of $g$ & $|S_\lambda|$ & Type of $g^2$ & Type of $g^3$ &
		$f(g)$ & $f(g^2)$ & $f(g^3)$ & Term \\
		\hline
		$1^9$           & $1$      & $1^9$           & $1^9$           & $280$ & $280$ & $280$ & $22,187,760$ \\
		$2{+}1^7$       & $36$     & $1^9$           & $2{+}1^7$       & $70$  & $280$ & $70$  & $14,469,840$ \\
		$2^2{+}1^5$     & $378$    & $1^9$           & $2^2{+}1^5$     & $20$  & $280$ & $20$  & $9,389,520$  \\
		$3{+}1^6$       & $168$    & $3{+}1^6$       & $1^9$           & $10$  & $10$  & $280$ & $312,480$    \\
		$2^3{+}1^3$     & $1,260$  & $1^9$           & $2^3{+}1^3$     & $10$  & $280$ & $10$  & $11,869,200$ \\
		$3{+}2{+}1^4$   & $2,520$  & $3{+}1^6$       & $2{+}1^7$       & $4$   & $10$  & $70$  & $816,480$    \\
		$2^4{+}1$       & $945$    & $1^9$           & $2^4{+}1$       & $16$  & $280$ & $16$  & $16,601,760$ \\
		$3{+}2^2{+}1^2$ & $7,560$  & $3{+}1^6$       & $2^2{+}1^5$     & $2$   & $10$  & $20$  & $816,480$    \\
		$3^2{+}1^3$     & $3,360$  & $3^2{+}1^3$     & $1^9$           & $1$   & $1$   & $280$ & $1,895,040$  \\
		$4{+}2{+}1^3$   & $7,560$  & $2^2{+}1^5$     & $4{+}2{+}1^3$   & $2$   & $20$  & $2$   & $997,920$    \\
		$3{+}2^3$       & $2,520$  & $3{+}1^6$       & $2^3{+}1^3$     & $4$   & $10$  & $10$  & $514,080$    \\
		$3^2{+}2{+}1$   & $10,080$ & $3^2{+}1^3$     & $2{+}1^7$       & $1$   & $1$   & $70$  & $1,451,520$  \\
		$4{+}2^2{+}1$   & $11,340$ & $2^2{+}1^5$     & $4{+}2^2{+}1$   & $4$   & $20$  & $4$   & $3,538,080$  \\
		$6{+}1^3$       & $10,080$ & $3^2{+}1^3$     & $2^3{+}1^3$     & $1$   & $1$   & $10$  & $241,920$    \\
		$3^3$           & $2,240$  & $3^3$           & $1^9$           & $10$  & $10$  & $280$ & $4,166,400$  \\
		$4{+}3{+}2$     & $15,120$ & $3{+}2^2{+}1^2$ & $4{+}2{+}1^3$   & $2$   & $2$   & $2$   & $362,880$    \\
		$6{+}2{+}1$     & $30,240$ & $3^2{+}1^3$     & $2^4{+}1$       & $1$   & $1$   & $16$  & $1,088,640$  \\
		$6{+}3$         & $20,160$ & $3^3$           & $2^3{+}1^3$     & $4$   & $10$  & $10$  & $4,112,640$  \\
		$9$             & $40,320$ & $9$             & $3^3$           & $1$   & $1$   & $10$  & $967,680$    \\
		\hline
		\multicolumn{7}{|r|}{\textbf{Sum}} & $\mathbf{95,800,320}$ \\
		\hline
	\end{tabular}
	\caption{The $19$ non-zero terms of the formula from Proposition~\ref{prop:adapB}. Each term is $|S_\lambda| \times \big[f(g)^3 + 3 f(g) f(g^2) + 2 f(g^3)\big]$, with all values of $f$ read from Table~\ref{tab:f-tipos} via the types of $g^2$ and $g^3$.}
	\label{tab:soma-final}
\end{table}

The final sum is $95,800,320$, so, by Proposition~\ref{prop:adapB},
\[
N \;=\; \frac{95,800,320}{9! \times 6} \;=\; \frac{95,800,320}{2,177,280}
\;=\; 44.
\]
This completes a closed proof of Theorem~\ref{teo:44}: the $44$ classes of the Sudoku enumeration, originally obtained as a byproduct of a cascade of computational reductions, are the value
of a sum of $19$ terms, each computable by hand from Lemma~\ref{lema:fg}.\footnote{All values in Tables~\ref{tab:f-tipos} and~\ref{tab:soma-final} were independently checked by computational brute force: the decomposition $A(g)+B(g)+C(g)$ against the direct count of fixed partitions per type, and the final sum against a direct evaluation of the formula from Proposition~\ref{prop:adapB} over the $9!$ elements of $S_9$, with $f$ computed via a third, independent route (the stabilizers of the $280$ partitions).}

The same method, applied analogously to the group $\mathrm{Stab}(\mathcal{C}) \le S_9$ acting on ordered and unordered pairs, closes the earlier stages of the cascade as well: the $131$ and $84$ counts from Section~\ref{sec:derivacao} also admit a derivation by hand, with the contributing cycle types being exactly the $19$ types with a non-zero term in Table~\ref{tab:soma-final}. We omit the details.


\section*{Use of artificial intelligence tools}

While preparing this work, the author used the artificial intelligence assistant Claude (Claude Sonnet~5 model, accessed via claude.ai) for language editing and translation, for assistance in deriving and developing some of the mathematical arguments, and for writing the Python scripts used in the computational verifications described throughout the paper. The author reviewed, verified, and complemented all content produced with the assistance of this tool, independently checked all computational verifications and all mathematical claims, and takes full responsibility for the content of this publication.



\begin{thebibliography}{99}
	
\bibitem{Adler2008}
A. Adler and I. Adler.
\textit{Fundamental Transformations of Sudoku Grids}.
Mathematical Spectrum, v. 41, n. 1, p. 2--7, 2008/2009.
	

\bibitem{Berthier2007}
D. Berthier.  
\textit{The Hidden Logic of Sudoku}. Lulu Press, 2007.


\bibitem{Delahaye2006}
J. Delahaye.
\textit{The Science Behind Sudoku}.
Scientific American, v. 294, n. 6, p. 80--87, 2006.


\bibitem{FelgenhauerJarvis2006}
B. Felgenhauer and F. Jarvis.
\textit{Mathematics of Sudoku I}.
Mathematical Spectrum, v. 39, n. 1, p. 15--22, 2006.

\bibitem{Hayes2006}
B. Hayes.
\textit{Unwed Numbers: The Mathematics of Sudoku}.
American Scientist, v. 94, p. 12--15, 2006. 
Available at: \url{https://www.americanscientist.org/article/unwed-numbers}. 
Accessed on: Jul. 21, 2026.


\bibitem{siteJarvis}
F. Jarvis. \textit{The 44 classes in counting Sudoku grids}.
Available at: \url{http://www.afjarvis.org.uk/sudoku/ed44.html}. 2005.
Accessed on: Jul. 21, 2026.

\bibitem{siteJarvis2}
F. Jarvis. \textit{There are 5472730538 essentially different Sudoku grids...
	and the Sudoku symmetry group}. 2005.
Available at: \url{http://www.afjarvis.org.uk/sudoku/sudgroup.html}.
Accessed on: Jul. 21, 2026.


\bibitem{Rosenhouse2011}
J. Rosenhouse and L. Taalman.  
\textit{Taking Sudoku Seriously: The Math Behind the World’s Most Popular Pencil Puzzle}.  
Oxford University Press, 2011.


\bibitem{Russel2006}
E. Russel and F. Jarvis.
\textit{Mathematics of Sudoku II}.
Mathematical Spectrum, v. 39, n. 2, p. 54--58, 2006/2007.



\bibitem{Shi}
F. Shi, M. Zhang, and H. Aslaksen. 
\textit{Enumerating $9\times 9$ Sudoku grids}. 2014. Available at \url{https://www.researchgate.net/publication/251390470}. Accessed on: Jul. 21, 2026.



\end{thebibliography}
\end{document}